\theoremstyle{definition}
\newtheorem{thm}{Theorem}
\numberwithin{thm}{section}
\newtheorem{defn}[thm]{Definition}     
\newtheorem{prop}[thm]{Proposition}
\newtheorem{lemma}[thm]{Lemma}
\newtheorem{construction}[thm]{Construction}
\newtheorem{rem}[thm]{Remark}
\newtheorem{notation}[thm]{Notation}
\newtheorem{ex}[thm]{Example}
\newcommand{\gpoid}[3]{\xymatrix{#1=\{#2 \ar@<3pt>[r] \ar@<-3pt>[r] & #3\}}}
\newcommand{\cchain}{{\sf 2\mydash term}}
\newcommand{\VB}{{\sf VB}}
\newcommand{\id}{\mathrm{id}}
\newcommand{\Hom}{\mathrm{Hom}}
\newcommand{\act}{{\sf Act}}
\newcommand{\R}{\mathbb{R}}
\newcommand{\WR}{{\sf WRep}}
\newcommand{\ruth}{{\sf RutH_2}}
\newcommand{\mydash}{\hbox{\sout{ }}} 
\title{Weak representations, representations up to homotopy, and VB-groupoids}
\author{Seth Wolbert}
\address{Department of Mathematics, University of Illinois, Urbana,
  IL 61801}
  \email{wolbert2@illinois.edu}
\begin{document}
  \begin{abstract}
  In this paper, I introduce weak representations of a Lie groupoid $G$.  I also show that there is an equivalence of categories between the categories of 2-term representations up to homotopy and weak representations of $G$.  Furthermore, I show that any VB-groupoid is isomorphic to an action groupoid associated to a weak representation on its kernel groupoid; this relationship defines an equivalence of categories between the categories of weak representations of $G$ and the category of VB-groupoids over $G$.
\end{abstract}
\maketitle
\section{Introduction}
For the duration of this paper, fix a Lie groupoid $\gpoid{G}{G_1}{G_0}$.  Representations up to homotopy of a Lie groupoid were first defined by Arias Abad and Crainic in \cite{AC}.  Their definition, designed as a global model of their representations of Lie algebroids, \cite{ACAlg}, allow one to address the inability to generalize many constructions available for group actions.  Indeed, there is no natural Lie groupoid action of a groupoid $G$ on its algebroid $A$ which one may call the adjoint action.  However, as noted in \cite{AC}, there is a natural notion of an adjoint representation up to homotopy.

Roughly speaking, one may think of representations up to homotopy as groupoid actions that fail to satisfy the axioms of an action in a controlled manner.  For a Lie groupoid $G$, a representation up to homotopy is formally defined as degree one operator $D$ on the bigraded right $C^\bullet(G)$-module $C(G;E)^\bullet$ associated to a graded vector bundle $E^\bullet \to G_0$.  This degree one operator must satisfy $D^2=0$ as well as a certain Leibnitz type rule (see Section \ref{s:Background} for more details).  

In practice, one generally works with the homogeneous pieces of this degree one operator.  These may be interpreted as a cochain structure $\delta^i:E^i\to E^{i+1}$ between the pieces of $E^\bullet$, a smooth collection of chain maps $\lambda_g^\bullet:E^\bullet \to E^\bullet$ (called a {\em quasi-action} of $G$), a smooth collection of chain homotopies $\Omega_{g,h}:\lambda^\bullet_g\circ \lambda^\bullet_h \Rightarrow \lambda^\bullet_{gh}$, and higher order coherence pieces.

Gracia-Saz and Mehta \cite{GM} showed that to any 2-term representation up to homotopy corresponds to a {\em VB-groupoid over $G$}: a map of Lie groupoids 
\[\pi:\gpoid{V}{V_1}{V_0} \to \gpoid{G}{G_1}{G_0}\]
(i.e., a smooth functor) such that the corresponding maps $\pi_1:V_1\to G_1$ and $\pi_0:V_0\to G_0$ on morphisms and objects are vector bundles and the structure maps are all maps of vector bundles covering the structure maps of $G$.  VB-groupoids were first introduced by Pradines \cite{P}.  A simple example is the tangent groupoid associated to any Lie groupoid.    

In addition to building a VB-groupoid from any representation up to homotopy, Gracia-Saz and Mehta showed that this establishes a bijection, up to isomorphism between 2-term representations up to homotopy of $G$ and VB-groupoids over $G$.  This was result was later extended by del Hoyo and Ortiz \cite{dHO}, who showed that this equivalence in fact corresponds to an equivalence of categories between the categories of 2-term representations up to homotopy of $G$ and VB-groupoids over $G$.

There are some technical issues with representations up to homotopy.  Indeed, while the homogeneous pieces may be interpreted as quasi-actions and smooth collections of homotopy operators, there are higher coherence conditions on these objects that defy geometric interpretation.  In addition, there are some notable issues with defining right representations up to homotopy.  

Also at issue is the lack of greater context.  Indeed, the definition of a representation up to homotopy relies heavily on the linear structure of the associated graded vector bundle.  It is not clear in general what an ``action up to homotopy'' on a more general graded manifold should look like.

On the other hand, weak representations of $G$ do not share these issues.  For a weak representation, the ``space'' $G$ acts on is replaced with a {\em linear Lie groupoid bundle} $\pi:V\to G_0$; roughly speaking, one should think of $V$ as a smooth collection of linear Lie groupoids.  $G$ acts on $V$ via a {\em weak action}: this is an action where each groupoid morphism $g\in G_1$ acts via linear equivalences of categories $A_g:\pi^{-1}(s(g))\to \pi^{-1}(t(g))$ while each pair $(g,h)$ of composable arrows corresponds to a linear natural isomorphism between $A_g\circ A_h$ and $A_{gh}$.  One may equivalently think of this as a (smooth) pseudofunctor from $G$ to the 2-category of equivalences between the fibers of $V$.

The main purpose of this paper is to show that the category of 2-term representations up to homotopy of $G$ $\ruth(G)$ and the category of weak representations of $G$ $\WR(G)$ are equivalent.  Additionally, we show that the category of weak representations of $G$ $\WR(G)$ and the category of $\VB(G)$ are equivalent.  This equivalence is naturally induced by the construction of action groupoids associated to each weak action of $G$ on a groupoid. 

This paper is organized as follows: in Section \ref{s:Background}, we discuss the background associated to representations up to homotopy and VB-groupoids in greater detail.  After this has been done, we may give a more detailed description as to the potential issues with the definition of representation up to homotopy.  In Section \ref{s:2-termandLLGB}, we discuss the equivalence between 2-term complexes of vector bundles and linear Lie groupoid bundles.  In Section \ref{s:WReps}, we define weak groupoid actions and weak representations.  Additionally, we show that the equivalence between 2-term complexes of vector bundles and Lie groupoid bundles extends to an equivalence between the categories of 2-term representations up to homotopy of $G$ and weak representations of $G$.  Finally, in Section \ref{s:VBsandWR}, we define the action groupoids associated to weak groupoid actions and show this construction defines an equivalence of categories between the category of weak representations up to homotopy of $G$ and VB-groupoids over $G$. 
\vskip 5 pt
\noindent {\bf Notation and Conventions:}  As stated above, for the duration of this paper, fix a Lie groupoid $\gpoid{G}{G_1}{G_0}$.  Let $s,t:G_1\to G_0$ be the source and target maps of $G$.  We assume that the reader is somewhat familiar with the definition of 2-categories.  2-categories and their functors are always assumed to be strict.  For all $n>0$, denote by $G_n$ the collection of $n$ composable arrows; i.e., $G_p:=\{(g_1,\ldots, g_n) \in G_1\times\ldots G_1 \; | \; s(g_i)=t(g_{i+1})\}$.
\vskip 5 pt
\noindent{\bf Acknowledgments:}  The author would like to acknowledge Eugene Lerman, Rui Loja Fernandes, and Daniel Berwick-Evans for numerous insightful conversations regarding representations up to homotopy and 2-category theory. 

\section{Background}\label{s:Background}

In this section, I will briefly discuss representations up to homotopy and their relationship to VB-groupoids.  Essentially, this is a collection of results from \cite{AC}, \cite{dHO}, and \cite{GM} that will motivate the remainder of the paper.

\begin{defn}
For each $p$, define the maps $s_p,t_p:G_p\to G_0$ by $t_p(g_1,\ldots, g_p)=t(g_1)$ and $s_p(g_1,\ldots, g_p)=s(g_p)$.  In other words, these are the source and targets of the arrow resulting from the product of the string of composable arrows $(g_1,\ldots, g_p)$.  

For every $n\geq 0$, {\em the space of degree $n$ groupoid cochains} is the space of smooth maps $C^n(G):=C^\infty(G_n)$ with the standard $\R$-vector space structure.  The coboundary map $\delta:C^\bullet(G)\to C^{\bullet+1}(G)$ for this cochain complex is defined as follows:
\begin{itemize}
\item For $n=0$: $\delta f(g):=f(s(g))-f(t(g))$.
\item For $n>0$: 
\begin{align*}\displaystyle \delta f (g_0,\ldots, g_{n})&:=\\ f(g_1,\ldots, g_n) + \sum_{i=1}^n (-1)^i f(g_0,\ldots, g_{i-1}g_i, \ldots g_n) &+(-1)^{n+1}f(g_0,\ldots, g_{n-1}).\end{align*}
\end{itemize}  
\end{defn}

\begin{defn}
Let $\pi:E\to G_0$.  Then {\em the space of degree $n$ groupoid cochains valued in $E$} is the vector space of sections $C^n(G;E):=\Gamma(t_n^*E)$ (or, in the case where $n=0$, simply the collection of sections $\Gamma(E)$). 

Note that $C^n(G;E)$ is a right $C(G)$-module: for $\omega\in C^p(G;E)$ and $f\in C^q(G)$, $\omega \star f$ is the degree $p+q$ groupoid cochain valued in $E$ defined by
\[(\omega \star f)(g_1,\ldots, g_{p+q}):= \omega(g_1,\ldots, g_p)\star f(g_{p+1}, \ldots, g_{p+q})\]

$C^\bullet(G;E)$ contains as a subspace {\em the normalized groupoid cochains valued in $E$}.  These are cochains $\eta\in C^n(G;E)$ for which $\eta(g_1,\ldots, g_n)=0$ whenever one of the $g_i$ is a unit.  More succinctly, for each degeneracy map $s_i:G_{n-1}\to G_n$, we require that $s_i^*\eta=0$.  The space of all degree $n$ normalized groupoid cochains is denoted by $\widehat{C}^n(G;E)$.  
\end{defn}

\begin{defn}
Let $\pi:E\to G_0$ be a vector bundle.  Then a {\em quasi-action} of $G$ on $\pi:E\to G_0$ is a smooth map $\lambda:G_1\times_{s,G_0,\pi}G_1\to E$, such that the restriction $\lambda_g:=\lambda(g,\cdot)$ for each $g\in G$ is a linear map $\lambda_g:E_{s(g)}\to E_{t(g)}$.  

A quasi-action $\lambda$ is {\em unital} if for every $x\in G_0$ with associated unit $u(x)$, $\lambda_{u(x)}=\mathrm{id}_{E_x}$.
\end{defn}

\begin{prop}\label{p:acttoop}
Suppose that $\pi:E\to G_0$ is a vector bundle.  Then there is a bijective correspondence $\lambda\mapsto D_\lambda$ between quasi-actions $\lambda$ of $G$ on $E$ and linear degree one operators $D_\lambda$ on $C^\bullet(G;E)$ satisfying the following graded Leibniz identity:
\[D_\lambda(f\star\omega)=  \omega \star \delta(f)+ (-1)^{|\eta|} D_\lambda\omega \star f.\]
Furthermore:
\begin{itemize}
\item The quasi-action $\lambda$ is unital if and only if the degree one operator $D_\lambda$ preserves $\widehat{C}^\bullet(G;E)$
\item The quasi-action $\lambda$ is an action if and only if it is unital and $(D_\lambda)^2=0$.
\end{itemize} 
\end{prop}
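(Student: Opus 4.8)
The plan is to manufacture $D_\lambda$ from $\lambda$ by an explicit formula modeled on the groupoid coboundary $\delta$, and then reduce every assertion to the degree-zero part of the complex. First I would \emph{define} the operator: for $\omega\in C^n(G;E)$ set
\[(D_\lambda\omega)(g_0,\ldots,g_n):=\lambda_{g_0}\,\omega(g_1,\ldots,g_n)+\sum_{i=1}^{n}(-1)^i\,\omega(g_0,\ldots,g_{i-1}g_i,\ldots,g_n)+(-1)^{n+1}\,\omega(g_0,\ldots,g_{n-1}),\]
which is exactly the formula for $\delta$ with the leading evaluation replaced by the transport $\lambda_{g_0}\colon E_{s(g_0)}\to E_{t(g_0)}$. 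Since $\omega(g_1,\ldots,g_n)\in E_{t(g_1)}=E_{s(g_0)}$, every term lands in $E_{t(g_0)}$, so $D_\lambda$ is a well-defined linear operator raising degree by one, and it is smooth because $\lambda$ is.

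The first substantive step is the graded Leibniz identity $D_\lambda(\omega\star f)=(D_\lambda\omega)\star f+(-1)^{|\omega|}\,\omega\star\delta f$ for $\omega\in C^p(G;E)$ and $f\in C^q(G)$. This is a direct but bookkeeping-heavy computation. Expanding both sides on a string $(g_0,\ldots,g_{p+q})$, the face terms $\omega(\ldots g_{i-1}g_i\ldots)$ split into those whose multiplication is internal to the first $p$ slots (these reassemble into $(D_\lambda\omega)\star f$, including the junction merge $g_{p-1}g_p$), those internal to the last $q$ slots (these reassemble into $(-1)^{|\omega|}\omega\star\delta f$), and a single boundary pair: the last term of $(D_\lambda\omega)\star f$ and the first term of $(-1)^{|\omega|}\omega\star\delta f$ are equal up to the opposite signs $(-1)^{p+1}$ and $(-1)^{p}$, so they cancel and correspond to no term on the left. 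I expect this sign-and-index matching at the junction to be the most error-prone computation in the proof.

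Next I would establish bijectivity via the principle that a Leibniz operator is pinned down by its restriction to $C^0(G;E)=\Gamma(E)$, where $(D_\lambda\omega)(g)=\lambda_g\,\omega(s(g))-\omega(t(g))$. Injectivity is immediate: if $D_\lambda=D_{\lambda'}$ then $\lambda_g\,\omega(s(g))=\lambda'_g\,\omega(s(g))$ for every section $\omega$ and every $g$, and since the values $\omega(s(g))$ exhaust $E_{s(g)}$, we get $\lambda=\lambda'$. For surjectivity, given any linear degree-one Leibniz operator $D$, I would set $P\omega(g):=(D\omega)(g)+\omega(t(g))$ on degree zero; testing the Leibniz identity against $f\in C^0(G)$ yields $P(f\omega)=(s^*f)\,P\omega$, so $\omega\mapsto P\omega(g)$ is $C^\infty(G_0)$-linear with $f$ acting by the scalar $f(s(g))$. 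By the standard tensoriality argument this forces $P\omega(g)$ to depend only on $\omega(s(g))$, producing a smooth bundle map $\lambda_g\colon E_{s(g)}\to E_{t(g)}$, i.e. a quasi-action with $D=D_\lambda$ on $C^0$. Because every $E$-valued cochain is a $C^\bullet(G)$-combination of degree-zero cochains (via a local frame and a partition of unity, writing a section of $t_n^*E$ as $\sum_i e_i\star f_i$), the Leibniz identity then propagates $D=D_\lambda$ to all degrees. This reconstruction step—extracting a genuine smooth bundle map from tensoriality and globalizing the module generation—is the main conceptual obstacle.

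Finally I would settle the two bulleted equivalences by the same degree-zero reduction. For unitality, evaluate $D_\lambda\omega$ on a normalized $\omega$ along a string one of whose entries is a unit $u(x)$: for an interior unit the two adjacent face terms (of signs $(-1)^j$ and $(-1)^{j+1}$) cancel and everything else vanishes by normalization, whereas for a leading unit one is left with $(\lambda_{u(x)}-\mathrm{id})\,\omega(g_1,\ldots,g_n)$; hence $D_\lambda$ preserves $\widehat C^\bullet(G;E)$ exactly when $\lambda_{u(x)}=\mathrm{id}$ for all $x$, i.e. when $\lambda$ is unital. For the action criterion, a short cancellation on $\omega\in\Gamma(E)$ gives $(D_\lambda^2\omega)(g_0,g_1)=(\lambda_{g_0}\lambda_{g_1}-\lambda_{g_0g_1})\,\omega(s(g_1))$, so $D_\lambda^2=0$ in degree zero is equivalent to multiplicativity. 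Moreover, since $\delta^2=0$, the Leibniz identity shows $D_\lambda^2$ is $C^\bullet(G)$-linear and therefore determined by its degree-zero part, so $D_\lambda^2=0$ in all degrees iff $\lambda_{gh}=\lambda_g\lambda_h$. Combining this with the unitality equivalence, $\lambda$ is an action precisely when it is unital and $D_\lambda^2=0$, as claimed.
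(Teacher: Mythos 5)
Your proof is correct. Note that the paper itself offers no proof of this proposition -- it is stated as background and implicitly deferred to Arias Abad--Crainic \cite{AC} -- so there is no in-paper argument to compare against; what you give is essentially the standard construction from that reference: define $D_\lambda$ by the simplicial coboundary formula with the leading face twisted by $\lambda_{g_0}$, verify Leibniz by the junction bookkeeping, and reduce everything else to degree zero. The two steps that actually need care are both handled soundly: the reconstruction of $\lambda$ from $D$ via $P\omega(g)=(D\omega)(g)+\omega(t(g))$, whose $s^*$-twisted $C^\infty(G_0)$-linearity plus tensoriality yields a bundle map $s^*E\to t^*E$; and the propagation to higher degrees, which uses that $C^n(G;E)=\Gamma(t_n^*E)$ is generated over $C^\bullet(G)$ by $\Gamma(E)$ (local frames plus a partition of unity) together with the $C^\bullet(G)$-linearity of $D^2$ forced by $\delta^2=0$. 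One cosmetic remark: the Leibniz identity as printed in the statement is garbled (it mixes $f\star\omega$ with $\omega\star f$ and uses an undeclared $|\eta|$); your reading $D_\lambda(\omega\star f)=(D_\lambda\omega)\star f+(-1)^{|\omega|}\omega\star\delta f$ is the intended one and is the convention under which your sign computation closes.
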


This proposition motivates the definition of a representation up to homotopy.  

\begin{defn}
Suppose that $\displaystyle \pi:E=\bigoplus_{i=0}^nE^i \to G_0$ is a graded vector bundle.  For each $n$, define $\displaystyle C(G;E)^n:=\bigoplus_{i=0}^n C^{i}(G;E^{n-i})$.  Then {\em a representation up to homotopy} is a {\em total} linear degree one operator $D$ on $C(G;E)^\bullet$ such that
\begin{itemize}
\item $D^2=0$; and
\item $D$ satisfies the graded Leibnitz identity:
\[D(f\star\omega)=  \omega \star \delta(f)+ (-1)^{|\eta|} D\omega \star f.\]
Here, $|\eta|$ refers to the {\em total} degree of $\eta$; i.e., for $\eta\in C^k(G;E^l)$, $|\eta|=k+l$.
\end{itemize} 

A {\em map of representations up to homotopy} between representations up to homotopy $(C(G;E)^\bullet, D)$ and $(C(G;E')^\bullet, D')$ is a linear map of right $C^\bullet(G)$-modules $\phi:C(G;E)^\bullet \to C(G;E')^\bullet$ such that $\phi\circ D=D'\circ \phi$.  
\end{defn}

By restricting the above degree one operator to homogeneous elements of $C(G;E)^\bullet$, one may decompose $D$ into a sequence of operators.  As we are only interested in the case of 2-term representations up to homotopy for this paper, we will only bother making this explicit in this case.  To do this, we must define transformation 2-cochains.

\begin{defn}
Let $\pi:E\to G_0$ and $\varpi:F\to G_0$ be two vector bundles over $G_0$.  Then a {\em transformation cochain from $E$ to $F$} is a section $\Omega$ of the vector bundle $\mathrm{Hom}(s_2^*E,t_2^*F)\to G_2$.  In other words, $\Omega$ associates to each pair of composable arrows $(g,h)\in G_2$ a linear map $\Omega_{g,h}:=\Omega(g,h):E_{s(h)}\to F_{t(g)}$.

A transformation cochain $\Omega$ from $E$ to $F$ is {\em normalized} if $\Omega(g,h)=0$ whenever $g$ or $h$ is a unit.    
\end{defn}

Now, we may describe 2-term representations up to homotopy in terms of homogeneous pieces:

\begin{prop}\label{propruth}
A {\em representation up to homotopy} of $G$ on a graded vector bundle $E^0\oplus E^1\to G_0$ is a uniquely determined by the tuple $(\delta:E^0\to E^1,\lambda^0,\lambda^1, \Omega)$ for $\delta:E^0\to E^1$ a map of vector bundles, $\lambda^0$ and $\lambda^1$ unital quasi-actions of $G$ on $E^0$ and $E^1$, and a normalized transformation cochain $\Omega$ from $E^1$ to $E^0$ satisfying for every triple of composable arrows $(g_1,g_2,g_3)\in G_3$:
\begin{itemize}
\item $\delta\circ \lambda_{g_1}^0 = \lambda_{g_1}^1\circ \delta$ 
\item $\lambda^0_{g_1g_2}-\lambda^0_{g_1} \circ\lambda^0_{g_2} = \Omega_{g_1,g_2}\circ\delta$
\item $\lambda^1_{g_1g_2}-\lambda^1_{g_1} \circ\lambda^1_{g_2} = \delta\circ\Omega_{g_1,g_2}$
\item $\lambda^0_{g_1}\circ\Omega_{g_2,g_3}-\Omega_{g_1g_2,g_3}+\Omega_{g_1,g_2g_3}-\Omega_{g_1,g_2}\circ\lambda^1_{g_3}=0$
\end{itemize}
\end{prop}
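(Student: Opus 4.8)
The plan is to decompose the total degree-one operator $D$ according to the natural bigrading on $C(G;E)^\bullet$ and to match the homogeneous pieces of $D^2=0$ against the four listed equations. Writing $(p,q)$ for the bidegree of a degree-$p$ groupoid cochain valued in $E^q$, the operator $D$ has total degree one, so each homogeneous component shifts the bidegree by some $(a,b)$ with $a+b=1$ and $a\geq 0$. Since $E^q=0$ unless $q\in\{0,1\}$, the only components that can act nontrivially have shifts $(0,1)$, $(1,0)$, and $(2,-1)$; write $D=D_0+D_1+D_2$ accordingly. I claim $D_0$ is the cochain extension of a bundle map $\delta\colon E^0\to E^1$, that $D_1$ is a pair of quasi-actions $\lambda^0,\lambda^1$ on $E^0$ and $E^1$, and that $D_2$ is a transformation cochain $\Omega$ from $E^1$ to $E^0$.

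First I would show the Leibniz identity forces $D$ to be determined by its restriction to $\Gamma(E^\bullet)=C^0(G;E^\bullet)$: choosing a local frame $\{e_\alpha\}$ of $E$ one writes any $\omega\in C^p(G;E)$ as a finite sum $\sum_\alpha e_\alpha\star f^\alpha$ with $f^\alpha\in C^p(G)$, and the Leibniz rule then expresses $D\omega$ through the $De_\alpha$ and $\delta f^\alpha$. On generators, the bidegree-$(0,1)$ component $D_0$ turns out to be $C^\infty(G_0)$-linear, since the correction term $e_\alpha\star\delta f^\alpha$ lands in the bidegree-$(1,0)$ slot; hence $D_0$ is a genuine bundle map $\delta$. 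The bidegree-$(1,0)$ component is exactly a quasi-action in each slot by Proposition \ref{p:acttoop}, and the bidegree-$(2,-1)$ component is tensorial of the correct type, i.e. a section $\Omega$ of $\Hom(s_2^*E^1,t_2^*E^0)$. Assuming, as in the usual normalized convention, that $D$ preserves the normalized subcomplex $\widehat{C}(G;E)^\bullet$, Proposition \ref{p:acttoop} makes $\lambda^0,\lambda^1$ unital, and the same reasoning makes $\Omega$ normalized. This yields the bijection between operators and tuples, and uniqueness is immediate since $D$ determines its homogeneous pieces and each piece determines its datum.

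It then remains to expand $D^2=\sum_{i,j}D_iD_j$ and separate it by bidegree. The pieces $D_0^2$ and $D_2^2$ take values in $E^2=0$ and $E^{-1}=0$ respectively, so they vanish automatically and impose no condition. The bidegree-$(1,1)$ piece $D_0D_1+D_1D_0$, evaluated on $\Gamma(E^0)$, yields $\delta\circ\lambda^0=\lambda^1\circ\delta$, the first equation. The bidegree-$(2,0)$ piece $D_0D_2+D_1^2+D_2D_0$ splits according to whether one starts in $E^0$ or $E^1$; using that $D_1^2$ measures the failure $\lambda_{g_1}\lambda_{g_2}-\lambda_{g_1g_2}$ of a quasi-action to be an action (again Proposition \ref{p:acttoop}), the $E^0$-part gives $\lambda^0_{g_1g_2}-\lambda^0_{g_1}\circ\lambda^0_{g_2}=\Omega_{g_1,g_2}\circ\delta$ and the $E^1$-part gives $\lambda^1_{g_1g_2}-\lambda^1_{g_1}\circ\lambda^1_{g_2}=\delta\circ\Omega_{g_1,g_2}$, the second and third equations. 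Finally the bidegree-$(3,-1)$ piece $D_1D_2+D_2D_1$, evaluated on $\Gamma(E^1)$ over $G_3$, produces the four-term coherence relation $\lambda^0_{g_1}\circ\Omega_{g_2,g_3}-\Omega_{g_1g_2,g_3}+\Omega_{g_1,g_2g_3}-\Omega_{g_1,g_2}\circ\lambda^1_{g_3}=0$.

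The hard part will be this last computation. Because $D_2=\Omega$ raises groupoid degree by two, composing it with $D_1$ mixes the quasi-action with the simplicial face structure implicit in evaluating a degree-two transformation cochain on a string of three composable arrows, and it is here that the two mixed products $D_1D_2$ and $D_2D_1$ unfold into the full alternating sum. Keeping the Koszul signs from the total grading consistent throughout the expansion---and confirming that the $E^1$-initial part of the $(1,1)$ piece and the $E^0$-initial part of the $(3,-1)$ piece vanish identically---is the only genuinely delicate bookkeeping; the remaining equations drop out directly from Proposition \ref{p:acttoop}. Conversely, reading the bidegree decomposition backwards shows that any tuple satisfying the four equations reassembles into an operator with $D^2=0$, which completes the characterization.
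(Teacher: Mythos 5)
Your proposal is correct and follows essentially the same route as the paper's source for this statement: the paper offers no proof beyond citing Theorem 2.1 of \cite{GM}, and that theorem is proved exactly by the bigraded decomposition $D=D_0+D_1+D_2$ and the bidegree-by-bidegree expansion of $D^2=0$ that you describe. Your caveat that unitality of $\lambda^0,\lambda^1$ and normalization of $\Omega$ require $D$ to preserve the normalized subcomplex is well taken, since the paper's stated definition omits that hypothesis while the cited reference works with it.
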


\begin{proof}
See Theorem 2.1, \cite{GM}.
\end{proof}

\begin{rem}\label{r:ruth}
The first condition of Proposition \ref{propruth} says $\delta:E^0\to E^1$ is equivariant with respect to the quasi-actions on $E^0$ and $E^1$.  Alternatively, it says that, thinking of $\delta:E^0\to E^1$ as a length 2 cochain complex of vector bundles over $G_0$, $\lambda_g^0$ and $\lambda_g^1$ define an automorphism of cochain complexes $\lambda_g$ for every $g\in G_1$.  

The second and third conditions of Proposition \ref{propruth} say that $\Omega_{g_1,g_2}$ functions as a chain homotopy relating the automorphisms of cochain complexes $\lambda_{g_1g_2}$ and $\lambda_{g_1}\lambda_{g_2}$ for every pair of composable arrows $(g_1,g_2)$.  

Finally, the fourth condition of Proposition \ref{propruth} is a cocycle type condition showing up as an extra consequence from the $D^2=0$ condition.  This has an interesting interpretation in terms of weak actions of groupoids.  
\end{rem}

One may also decompose maps of representations up to homotopy in an effective manner.

\begin{prop}\label{propmapofruth}
Suppose that $(C(G;E)^\bullet,D)$ and $C(G;E')^\bullet,D')$ are representations up to homotopy for graded vector bundles $E=E^0\oplus E^1\to G_0$ and $E'=\oplus E'^0\oplus E'^1\to G_0$.  If $(\delta:E^0\to E^1,\lambda^0,\lambda^1, \Omega)$ and $(\delta':E'^0\to E'^1, \lambda'^0,\lambda'^1, \Omega')$ are the unique tuples associated to $(C(G;E)^\bullet,D)$ and $C(G;E')^\bullet,D')$ (see Proposition \ref{propruth}), then maps of representations up to homotopy $\phi:(C(G;E)^\bullet,D) \to C(G;E')^\bullet,D')$ are in bijective correspondence with triples $(\phi^0:E^0\to E'^0,\phi^1:E^1\to E'^1,\mu)$ for $\phi^i$ maps of vector bundles and $\mu\in C^1(G;s^*\Hom(E^1,E'^0))$ satisfying for all composable $(g,h)\in G_2$:
\begin{itemize}
\item $\phi^1\circ \delta = \delta'\circ \phi^0$
\item $\phi^0\circ \lambda^0_g-\lambda'^0_g\circ \phi^0 = \mu_g\circ \delta$
\item $\phi^1\circ \lambda^1_g-\lambda'^1_g \circ \phi_0 = \delta'\circ \mu_g$
\item $\phi^0\circ \Omega_{g,h}+ \mu_g\circ \lambda^1_h + \lambda'^0_g\circ \mu_h = \mu_{gh} + \Omega_{g,h}' \circ \phi^1$
\end{itemize} 
\end{prop}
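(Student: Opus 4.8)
The plan is to mirror the proof of Proposition \ref{propruth} by exploiting the right $C^\bullet(G)$-module structure to reduce the identity $\phi\circ D = D'\circ\phi$ to a finite check on generators. First I would record the bigraded shape of the modules. For the two-term bundle $E = E^0\oplus E^1$ one has $C(G;E)^n = C^n(G;E^0)\oplus C^{n-1}(G;E^1)$, and each summand $C^p(G;E^i) = \Gamma(t_p^*E^i)$ is a pullback module, hence is generated over $C^\bullet(G)$ by $\Gamma(E^i)$ through the $\star$-action. Thus $C(G;E)^\bullet$ is generated as a right $C^\bullet(G)$-module by $\Gamma(E^0)\subseteq C(G;E)^0$ together with $\Gamma(E^1)\subseteq C(G;E)^1$, and a degree-preserving module map $\phi$ is determined by its restriction to these two subspaces. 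Since $\phi$ preserves degree, $\phi|_{\Gamma(E^0)}$ takes values in $C(G;E')^0 = \Gamma(E'^0)$ and $\phi|_{\Gamma(E^1)}$ takes values in $C(G;E')^1 = \Gamma(t_1^*E'^0)\oplus\Gamma(E'^1)$. Tensoriality (the $C^\infty(G_0)$-linear part of module-linearity) forces these restrictions to come from vector bundle maps $\phi^0:E^0\to E'^0$ and $\phi^1:E^1\to E'^1$ together with a section $\mu$ of $\Hom(s^*E^1,t^*E'^0)\to G_1$, i.e. $\mu_g:E^1_{s(g)}\to E'^0_{t(g)}$; conversely, because $C^p(G;E^i)$ is a pullback module, any such triple extends uniquely to a degree-preserving module map. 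This establishes the bijection at the level of underlying data, prior to imposing compatibility with the differentials.

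The key observation is that $\Psi := \phi\circ D - D'\circ\phi$ is again a right $C^\bullet(G)$-module map. Indeed, applying the graded Leibniz identity to both $D$ and $D'$ and using $|\phi\omega| = |\omega|$, the two $\star\,\delta f$ contributions cancel, leaving $\Psi(\omega\star f) = (\Psi\omega)\star f$. Consequently $\phi$ is a map of representations up to homotopy exactly when $\Psi$ vanishes on the generators $\Gamma(E^0)$ and $\Gamma(E^1)$, which converts the infinite system $\phi D = D'\phi$ into finitely many identities and settles both directions of the bijection simultaneously.

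It then remains to evaluate the degree-one map $\Psi$ on $\xi\in\Gamma(E^0)$ and on $\xi\in\Gamma(E^1)$, obtaining elements of total degree $1$ and $2$ respectively, and to separate the results into bidegree-homogeneous components. Writing $D = D_0 + D_1 + D_2$, where $D_0$ is induced by $\delta$, $D_1$ by the quasi-actions $\lambda^0,\lambda^1$, and $D_2$ by $\Omega$ (and likewise for $D'$), and $\phi = \phi_0 + \phi_1$, where $\phi_0$ is induced by $(\phi^0,\phi^1)$ and $\phi_1$ by $\mu$ (there being no higher terms since $E$ is two-term), one reads off: the $C^0(G;E'^1)$-component of $\Psi\xi$ for $\xi\in\Gamma(E^0)$ yields the first condition $\phi^1\circ\delta = \delta'\circ\phi^0$; its $C^1(G;E'^0)$-component yields the second; the $C^1(G;E'^1)$-component of $\Psi\xi$ for $\xi\in\Gamma(E^1)$ yields the third (so that the ``$\phi_0$'' in that line should read $\phi^1$); and the $C^2(G;E'^0)$-component yields the fourth.

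The hard part will be the sign and degree bookkeeping in this last step, and especially the fourth equation. It is the only one collecting contributions from the transformation cochains, receiving the four pieces $\phi_0\circ D_2$ (giving $\phi^0\circ\Omega_{g,h}$), $D'_2\circ\phi_0$ (giving $\Omega'_{g,h}\circ\phi^1$), and the two twisted-$\mu$ terms $\phi_1\circ D_1$ and $D'_1\circ\phi_1$ (giving $\mu_g\circ\lambda^1_h$, $\lambda'^0_g\circ\mu_h$, and $\mu_{gh}$), which recombine into the stated cocycle relation. Matching the displayed sign conventions---including the sign in the second condition---requires tracking the $(-1)$-factors appearing in the explicit formulas for the $D_i$ and for the action of $\mu$ on degree-one cochains; once a consistent convention is fixed, each of the four components is a direct, if tedious, verification.
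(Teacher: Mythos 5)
The paper does not actually prove this proposition; it simply cites Proposition 2.6 of \cite{dHO}, so your argument should be compared against the standard proof there rather than against anything in this text. Your route is sound and is, in essence, the right self-contained argument. The two structural points that carry the proof are both correctly identified: first, that $C(G;E)^\bullet$ is generated as a right $C^\bullet(G)$-module by $\Gamma(E^0)\oplus\Gamma(E^1)$, with tensoriality over the $\star$-action (note that $C^0(G)$ acts on $C^1(G;E')$ through $s^*$, which is exactly why the cross-term is a section of $\Hom(s^*E^1,t^*E'^0)$ rather than a bundle map over $G_0$); and second, that $\Psi=\phi\circ D-D'\circ\phi$ is again a module map because the $\omega\star\delta f$ terms in the two Leibniz expansions cancel when $\phi$ preserves total degree, so the intertwining condition reduces to the finitely many bidegree components of $\Psi$ on generators. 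The count of components (two in total degree $1$, two in total degree $2$ since $E'^2=0$) matches the four listed identities, and you are right that the $\phi_0$ in the third bullet is a typo for $\phi^1$. One small caution for the ``tedious verification'': the degree-$(1,0)$ component $D_1$ of $D$ on $\Gamma(E^i)$ is not purely the quasi-action but $(D_1\xi)(g)=\lambda_g(\xi(s(g)))-\xi(t(g))$; the face-map term $-\xi(t(g))$ cancels between $\phi_0\circ D_1$ and $D'_1\circ\phi_0$, which is why the second and third conditions involve only $\lambda$, $\lambda'$, $\delta'$ and $\mu$. With that bookkeeping made explicit, your sketch fills in the proof the paper outsources.
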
 

\begin{proof}
See Proposition 2.6, \cite{dHO}.
\end{proof}

\begin{rem}\label{r:mapofruth}
One may interpret the data of Proposition \ref{propmapofruth} as follows.  Again, thinking of $\delta:E^0\to E^1$, $\delta':E'^0\to E'^1$ as 2-term complexes of vector bundles, the first item says that $\phi^\bullet$ is a chain map.  The second and third items say that $\mu_g$ functions for every $g$ as a homotopy operator between the two composition of chain maps $\phi^\bullet\circ\lambda^\bullet_g$ and $\lambda'^\bullet_g\circ \phi^\bullet$.  Like the fourth item of Proposition \ref{propruth}, the fourth item above will have an interesting interpretation once we have switched to weak actions of groupoids.
\end{rem}

\begin{notation}
From here forward, let $\ruth(G)$ denote the category of 2-term representations up to homotopy of $G$.  We will primarily think of representations up to homotopy and their morphisms as the data detailed in Proposition \ref{propruth} and Proposition \ref{propmapofruth}. 
\end{notation}

Now, we turn to discuss VB-groupoids.  As noted by Gracia-Saz and Mehta \cite{GM}, VB-groupoids may be thought of either as vector bundles in the category of groupoids or as groupoids in the category of vector bundles.  

\begin{defn}
A {\em VB-groupoid over $G$} is a map of Lie groupoids $\pi:V\to G$ such that the maps on objects and morphisms $\pi_0:V_0\to G_0$ and $\pi_1:V_1\to G_1$ are vector bundles and all the structure maps of $V$ are maps of vector bundles.

A {\em map of VB-groupoids over $G$} $f: (\pi:V\to G)\to (\varpi:W\to G)$ is a map of Lie groupoids such that $f_0:V_0\to W_0$ and $f_1:V_1\to W_1$ are maps of vector bundles covering $\id_{G_0}$ and $\id_{G_1}$, respectively.

From here forward, let $\VB(G)$ denote the category of VB-groupoids over $G$.  
\end{defn}

\begin{ex}
For any Lie groupoid $\gpoid{H}{H_1}{H_0}$, the tangent groupoid \\$\gpoid{TH}{TH_1}{TH_0}$ is a VB-groupoid over $H$.
\end{ex}

\begin{ex}
If $\pi:E\to G_0$ is a vector bundle on which $G$ acts, then the action groupoid $\gpoid{G\rtimes E}{G_1\times_{G_0} E}{E}$ is a VB-groupoid over $G$.  
\end{ex}

As shown by Mehta and Gracia-Saz, to any 2-term representation up to homotopy, one may associate a VB-groupoid called {\em the semi-direct product}:

\begin{construction}[(see Example 3.16, \cite{GM})]\label{c:semidirect}
Let $(\delta:E^0\to E^1,\lambda^0,\lambda^1,\Omega)$ be (the data associated to) a representation up to homotopy (see Proposition \ref{propruth}).  Construct a VB-groupoid $\gpoid{V}{V_1}{V_0}$ as follows:
\begin{itemize}
\item For objects, take $V_0=E^1$.
\item For morphisms, take $V_1=s^*E^1\oplus t^*E^0$.
\item For the source and target $\tilde{s},\tilde{t}:V_1\to V_0$ of $V$, define
 \[\tilde{s}(g,e_0,e_1)=e_1\text{ and }\tilde{t}(g,e_0,e_1)=\delta(e_0)+\lambda^1_g(e_1).\]
\item For multiplication, define 
\[m((g,e_0,e_1),(h,f_0,f_1)):= (gh,e_0+\lambda^0_g(f_0)-\Omega_{g,h}(f_1),f_1)\]
\item For units, for each $e\in E^1_x$, $x\in G_0$, take $\tilde{u}(e):=(u(x), 0,e)$.
\item For inverses, take $(g,e_0,e_1)^{-1}:=(g^{-1},-\lambda^0_{g^{-1}}(e_0)+\Omega_{g^{-1},g}(e_1),\delta(e_0)+\lambda^1(e_1))$ 
\end{itemize}
\end{construction}

\begin{thm}\label{t:ruthsandVB}
Let $\Psi:\ruth(G)\to \VB(G)$ be the functor defined as follows:
\begin{itemize}
	\item On objects: let $\Psi(\delta:E^0\to E^1,\lambda^0,\lambda^1,\Omega)$ be the semi-direct product, as built in Construction \ref{c:semidirect}.
	\item On morphisms: let $\Psi(\phi^0,\phi^1,\mu)$ be the morphism of VB-groupoids $(g,e_0,e_1)\mapsto (g,\phi^0(c)+\mu_g(e_1),\phi^1(c))$.
\end{itemize}
Then $\Psi$ is an equivalence of categories. 
\end{thm}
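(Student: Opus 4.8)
The plan is to verify the three defining properties of an equivalence in turn: that $\Psi$ is a well-defined functor, that it is fully faithful, and that it is essentially surjective. Construction~\ref{c:semidirect} (following \cite{GM}) already produces a VB-groupoid from each object, so for functoriality it remains to check the assignment on morphisms. Given a triple $(\phi^0,\phi^1,\mu)$ satisfying the four conditions of Proposition~\ref{propmapofruth}, one checks that $F(g,e_0,e_1)=(g,\phi^0(e_0)+\mu_g(e_1),\phi^1(e_1))$ intertwines $\tilde s$ and $\tilde t$ (conditions one through three) and respects $m$ and $\tilde u$ (condition four with the equivariance of $\phi^\bullet$); compatibility with $\id$ and with composition of triples is routine bookkeeping. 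Thus $\Psi$ is a functor, and the first two properties reduce to the structure equations of Propositions~\ref{propruth} and~\ref{propmapofruth}, while essential surjectivity carries the real content.

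For full faithfulness, fix objects $\Psi(E),\Psi(E')$ and let $F:\Psi(E)\to\Psi(E')$ be any VB-groupoid morphism over $G$. Since $F$ covers $\id_G$ and is fiberwise linear, $F_0:E^1\to E'^1$ and $F_1:s^*E^1\oplus t^*E^0\to s^*E'^1\oplus t^*E'^0$ are bundle maps. Compatibility with $\tilde s$ shows the $E'^1$-component of $F_1$ equals $F_0\circ\tilde s$; compatibility with $\tilde t$ and $\tilde u$ forces the core-to-core part of $F_1$ to be pulled back from a single bundle map $\phi^0:E^0\to E'^0$; and the remaining cross term is a cochain $\mu\in C^1(G;s^*\Hom(E^1,E'^0))$. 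Setting $\phi^1:=F_0$, the demand that $F$ respect $m$ returns exactly the four equations of Proposition~\ref{propmapofruth}. Hence every VB-morphism arises from a unique such triple, so $\Psi$ is a bijection on each $\Hom$-set.

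For essential surjectivity, let $\pi:V\to G$ be an arbitrary VB-groupoid with structure maps $\tilde s,\tilde t:V_1\to V_0$. Set $E^1:=V_0$ and let $E^0:=C$ be the core bundle, $C_x:=\ker\big(\tilde s|_{(V_1)_{u(x)}}\big)$. The sequence of vector bundles over $G_1$,
\[0 \longrightarrow t^*C \longrightarrow V_1 \stackrel{\tilde s}{\longrightarrow} s^*V_0 \longrightarrow 0,\]
is short exact, and I choose a linear splitting $\sigma:s^*V_0\to V_1$ of $\tilde s$ agreeing with $\tilde u$ over $u(G_0)$; such a unital cleavage exists because vector-bundle surjections split and the units form a section. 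From $\sigma$ I define $\delta:=\tilde t|_C$, the unital quasi-action $\lambda^1_g(e):=\tilde t(\sigma(g,e))$, the core quasi-action $\lambda^0_g(c)$ represented by the core element $\sigma(g,\delta(c))\cdot\iota(c)\in t^*C|_g$ (where $\iota:C\hookrightarrow V_1$ includes the core over the units), and the normalized cochain $\Omega$ as the core-valued defect $\sigma(g,\lambda^1_h(f))\cdot\sigma(h,f)-\sigma(gh,f)$ measuring the failure of $\sigma$ to be multiplicative. The cleavage then yields a fiberwise-linear isomorphism $V\cong\Psi(\delta,\lambda^0,\lambda^1,\Omega)$, sending $v\in V_1$ to $(\pi_1(v),\,\text{core part of }v,\,\tilde s(v))$ and the identity on $V_0$, which one checks to be a functor.

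The delicate part is essential surjectivity, specifically the verification that the data extracted from a unital cleavage satisfy all four equations of Proposition~\ref{propruth}. The first three follow quickly from functoriality of $\tilde s,\tilde t$ and from unitality, but the fourth, cocycle, identity must be teased out of the associativity axiom of $V$ by carefully tracking core and side components through a triple product; this is where I expect the main computational difficulty to lie. One should also confirm that a different choice of cleavage yields an isomorphic representation, so that the essential image is well understood, though for surjectivity alone exhibiting a single $\sigma$ suffices.
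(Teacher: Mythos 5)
Your outline is sound, but note that the paper does not actually prove this theorem: it simply cites Gracia-Saz--Mehta \cite{GM} for the bijection on isomorphism classes and del Hoyo--Ortiz \cite{dHO} (Theorem 2.7) for the upgrade to an equivalence of categories. What you have written is, in effect, a reconstruction of the argument in those references rather than an alternative to it: the decomposition of $V_1$ via the short exact sequence $0\to t^*C\to V_1\to s^*V_0\to 0$, the choice of a unital connection $\sigma$ splitting $\tilde s$ (this is exactly Definition \ref{d:connection} of the paper), and the extraction of $(\delta,\lambda^0,\lambda^1,\Omega)$ from $\sigma$ is precisely the mechanism of \cite{GM}, and it also reappears almost verbatim in the paper's own Lemma \ref{l:esssurj} for the weak-representation analogue. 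Two small points to tighten: first, the claim that the core-to-core block of $F_1$ is pulled back from a single map $\phi^0$ over $G_0$ does not follow from compatibility with $\tilde s$, $\tilde t$, $\tilde u$ alone --- you need compatibility with multiplication (factoring $(g,e_0,e_1)$ through an arrow over a unit, using that $\Omega$ is normalized) to see that this block is independent of $g$; compatibility with $\tilde u$ gives only the normalization $\mu_{u(x)}=0$. Second, your formulas for $\lambda^0$ and $\Omega$ land in $\ker(\tilde s)$ over $g$ (resp.\ $gh$) rather than in the core itself, so you must make the identification with $C_{t(g)}$ via right translation by $0_{g^{-1}}$ explicit, and check that your sign for $\Omega$ matches the $-\Omega_{g,h}(f_1)$ appearing in the multiplication of Construction \ref{c:semidirect}. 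With those adjustments the sketch is the standard, correct proof; you correctly identify the cocycle identity and the multiplicativity of the splitting isomorphism as the only computations with real content.
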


\begin{proof}
In \cite{GM}, Gracia-Saz and Mehta showed that $\Psi$ on objects induces an bijection on isomorphism classes.  In Theorem 2.7 of \cite{dHO}, del Hoyo and Ortiz built the functor $\Psi$ and actually showed this bijection came from an equivalence of categories.
\end{proof}

One tool required in the proof of Theorem \ref{t:ruthsandVB} are connections associated to VB-groupoids.  This is a standard tool used by Arias Abad and Crainic \cite{AC} and Gracia-Saz and Mehta \cite{GM}.  It also has a more general version, known as a cleavage, for general fibrations of groupoids (see for instance Definition 2.1.4, \cite{dHF}).  As we need connections to define the adjoint representation up to homotopy associated to any groupoid and we will also need this tool later, we define it here:

\begin{defn}\label{d:connection}
Let $\pi:V\to G$ be a VB-groupoid over $M$ with source and target maps $\tilde{s},\tilde{t}:V_1\to V_0$ and unit map $\tilde{u}:V_0\to V_1$.  Then a {\em connection on $V$} is a map of vector bundles $\sigma:s^*V_0\to V_1$ that is a splitting of the map $\tilde{s}:V_1\to s^*V_0$ (naturally extended to all of $V_1$).

For each $g\in G_1$, denote by $\sigma_g$ the restriction of $\sigma$ to the fiber $s^*V_0|_g$.

A connection is {\em unital} if, for every unit $u$ of $G$, $\sigma_u(v)=\tilde{u}(v)$.
\end{defn}

\begin{rem}
As explained by Arias Abad and Crainic \cite{AC}, one may find a unital connection for any VB-groupoid.
\end{rem}

\begin{ex}
For $TG$ the tangent groupoid associated to $G$, the representation up to homotopy associated to $TG$ is the adjoint representation on the algebroid $\rho:A\to TG_0$ of $G$.  Explicitly, the adjoint representation was defined by Arias Abad and Crainic as follows (see Section 3.2, \cite{AC}).  First, choose a connection $\sigma$ of $G$.  Then the adjoint representation up to homotopy induced by $\sigma$ is that with quasi-actions 
\[\lambda^1_g(v):=dt(\sigma_g(v))\] on $TG_0$ and 
\[\lambda^0_g(X):= \sigma_g(\rho(X))\cdot X\cdot 0_{g^{-1}}\]
on $A$.  Here, $\cdot$ denote multiplication in $TG$ and $0_{g^{-1}}$ is the zero element of $T_{g^{-1}}G_1$.  As explained by Gracia-Saz and Mehta, $\sigma_g(\rho(X))$ and $0_{g^{-1}}$ are the unique elements of $TG_1$ by which $X$ may be multiplied on the left and right, respectively (see Section 3.3, \cite{GM}).  

As noted by Arias Abad and Crainic (see Proposition 3.16, \cite{AC}), for any two choices of connection $\sigma$, $\sigma'$ of $G$, the corresponding representations up to homotopy are isomorphic.
\end{ex}

\section{2-Term chain complexes and linear Lie groupoid bundles}\label{s:2-termandLLGB}

To establish the equivalence of 2-term representations up to homotopy and linear Lie groupoid bundles, it will be convenient to establish an equivalence of 2-categories between the 2-categories of 2-term chain complexes and linear Lie groupoid bundles.  This is just a scaled up version of a result already proven by Baez and Crans (see Theorem 12, \cite{BC}).  

To begin, let's build the 2-category of linear Lie groupoid bundles: 

\begin{defn}\label{d:VBoverM}
Given a manifold $M$, a {\em linear Lie groupoid bundle} over $M$ is a VB-groupoid over the trivial groupoid associated to $M$ (i.e., the groupoid with objects and morphisms $M$ and structure maps $\id_M$).

A {\em map of linear Lie groupoid bundles} $f:(\pi:V\to M) \to (\varpi:W\to N)$ is a map of Lie groupoids such that $f_0:V_0\to W_0$ and $f_1:V_1\to W_1$ are maps of vector bundles covering the same map on the base ${\bar{f}}:M\to N$.

A {\em natural transformation of linear Lie groupoid bundles} is a natural transformation $\alpha:f\Rightarrow g$ of linear Lie groupoid bundle maps covering $\bar{f}:M\to N$ that is also a map of vector bundles covering $\bar{f}$.  Explicitly, it is a map of vector bundles $\alpha:V_0\to W_1$ covering $\bar{f}:M\to N$ such that, for every arrow $v:x\to y$ in $V$, the diagram
\[\xymatrix{f(x)\ar[r]^{f(v)} \ar[d]_{\alpha(x)} & f(y)\ar[d]^{\alpha(y)} \\ g(x) \ar[r]_{g(v)} & g(y)}\] 
\end{defn}

\begin{rem}
As the name suggests, it is useful to think of a linear groupoid bundle $\pi:V\to M$ as smooth family of linear Lie groupoids.  Indeed, note that for each $x\in M$, we have a linear groupoid $\gpoid{\pi^{-1}(x)}{\pi_1^{-1}(x)}{\pi_0^{-1}(x)}$ (i.e., a groupoid with objects and morphisms vector spaces and all structure groups linear maps) which is a Lie subgroupoid of $V$.  
\end{rem}

We may now define the 2-category of linear Lie groupoid bundles over a particular manifold $M$.

\begin{defn}
Let $\VB$ be the 2-category of linear Lie groupoid bundles defined as follows:

\begin{itemize}
\item Objects: linear Lie groupoid bundles $\pi:V\to M$.

\item 1-morphisms: maps of linear Lie groupoid bundles $f:V\to W$.

\item 2-morphisms: natural transformations of linear Lie groupoid bundles.
\end{itemize}
1-morphisms compose as expected while 2-morphisms compose vertically and horizontally as natural transformations.
\end{defn}

To see this is a well-defined 2-category, we must check vertical and horizontal composition are well-defined.

\begin{prop}
Natural transformations of linear Lie groupoid bundles are closed under vertical and horizontal composition. 
\end{prop}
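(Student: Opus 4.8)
The plan is to observe that a natural transformation of linear Lie groupoid bundles carries two independent pieces of data: an underlying natural transformation of the smooth functors between the Lie groupoids, together with the requirement that its component map $V_0 \to W_1$ be a map of vector bundles covering the relevant base map. The first piece comes for free, since vertical and horizontal composites of natural transformations are again natural transformations by the standard theory of the $2$-category $\mathbf{Cat}$ (equivalently, of Lie groupoids with smooth functors); in particular, the naturality squares of the composites hold automatically and need no new verification. Thus the whole content of the proposition is to check that the composite component maps remain maps of vector bundles covering the correct base map, and smoothness will be automatic since every map in sight is smooth.

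For vertical composition, suppose $\alpha\colon f \Rightarrow g$ and $\beta\colon g \Rightarrow h$ with $f,g,h\colon V \to W$ all covering $\bar{f}\colon M \to N$, and write $m_W$ for the multiplication of $W$. The component of the vertical composite is $(\beta \bullet \alpha)(x) = m_W(\beta(x), \alpha(x))$, so that $\beta \bullet \alpha = m_W \circ (\beta, \alpha)$. I would first note that the composable pairs $W_1 \times_{W_0} W_1$ form a vector bundle over $N$ (they are cut out of the Whitney sum $W_1 \oplus W_1$ by the linear condition $\tilde{s}(p) = \tilde{t}(q)$, since for a linear Lie groupoid bundle the source and target cover $\id_N$), and that $(\beta, \alpha)\colon V_0 \to W_1 \times_{W_0} W_1$ is a map of vector bundles covering $\bar{f}$: its two components are, and the matching condition $\tilde{s}(\beta(x)) = g(x) = \tilde{t}(\alpha(x))$ holds so that it lands in the composable pairs. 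Because the VB-groupoid axioms make $m_W$ a map of vector bundles, the composite $m_W \circ (\beta, \alpha)$ is a map of vector bundles covering $\bar{f}$, as required.

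For horizontal composition, suppose $\alpha\colon f \Rightarrow f'$ with $f, f'\colon V \to W$ covering $\bar{f}\colon M \to N$ and $\gamma\colon h \Rightarrow h'$ with $h, h'\colon W \to X$ covering $\bar{h}\colon N \to P$, and write $m_X$ for the multiplication of $X$. I would use the standard formula $(\gamma * \alpha)(x) = m_X\bigl((h')_1(\alpha(x)),\, \gamma(f_0(x))\bigr)$, i.e. $\gamma * \alpha = m_X \circ \bigl((h')_1 \circ \alpha,\ \gamma \circ f_0\bigr)$. Here $(h')_1 \circ \alpha\colon V_0 \to X_1$ is a composite of maps of vector bundles, hence a map of vector bundles covering $\bar{h} \circ \bar{f}$, and similarly $\gamma \circ f_0\colon V_0 \to X_1$ is a map of vector bundles covering $\bar{h} \circ \bar{f}$. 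The target of $\gamma(f_0(x))$ is $h'(f(x))$, which equals the source of $(h')_1(\alpha(x))$, so the pair lands in the composable-pairs bundle $X_1 \times_{X_0} X_1$ over $P$; composing with the map of vector bundles $m_X$ again yields a map of vector bundles, now covering $\bar{h} \circ \bar{f}$.

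The argument is essentially bookkeeping, and the only point requiring genuine care is the last one: under horizontal composition one must track the changing base map ($\bar{h} \circ \bar{f}$ rather than $\bar{f}$) and confirm at each stage that the source/target matchings place the paired maps inside the correct composable-pairs vector bundle so that multiplication may legitimately be applied. Once the VB-groupoid axioms are invoked to guarantee that the multiplications $m_W, m_X$ and the functor component $(h')_1$ are linear on fibers, linearity of both composites is immediate, completing the verification that $\VB$ is a well-defined $2$-category.
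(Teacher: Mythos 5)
Your proposal is correct and follows essentially the same route as the paper: factor the vertical composite as $m_W\circ(\beta,\alpha)$ through the composable-pairs bundle and invoke linearity of the groupoid multiplication, then treat the horizontal composite analogously (your formula $h'(\alpha(x))\cdot\gamma(f(x))$ is the naturality-equivalent twin of the paper's $\gamma(f'(x))\cdot h(\alpha(x))$, so the same factorization argument applies). You actually spell out the horizontal case and the base-map bookkeeping more explicitly than the paper, which simply declares it "a very similar argument."
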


\begin{proof}
Let $\pi:V\to L$ and $\pi':W\to M$ be linear Lie groupoid bundles and let $f:L\to M$ be a smooth map.  Let $F,F', F'':V\to W$ be three maps of linear Lie groupoid bundles and let $\eta:F\Rightarrow F'$ and $\eta:F'\Rightarrow F''$ be two natural transformations of linear Lie groupoid bundles.  Recall that the vertical composition of $\eta'$ with $\eta$ takes $v\in V_0$ to the arrow $\eta'(v)\circ \eta(v)$.  

Thinking of $\eta$ and $\eta'$ as smooth maps $\eta,\eta':V_0\to W_1$, then, this means that vertical composition $\eta'\circ\eta:V_0\to W_1$ is just the smooth map $\eta'\cdot \eta:V_0\to W_1$, for $\cdot$ multiplication in $W$.  Note that, since $\eta$ and $\eta'$ induce a smooth map $(\eta,\eta'):V_0\to W_1\times_{W_0} W_1$ of vector bundles from $V_0$ to the composable arrows of $W$.  Thus, $\eta'\circ \eta$ factors as maps of vector bundles $m\circ (\eta,\eta')$.  

A very similar argument show that, the horizontal composition $\nu * \eta$ of natural isomorphisms $\eta:F\Rightarrow F'$ and $\nu:G\Rightarrow G'$ which  takes $c\in V_0$ to the composition $\nu(F'(v))\circ G(\eta(v))$ must also be a natural transformation of linear Lie groupoid bundles.  
\end{proof}

Now, we may define the 2-category of 2-term vector bundles over a manifold $M$:

\begin{defn}
	Let $\cchain$ be the 2-category of length 2 cochain complexes defined as follows:
	\begin{itemize}
		\item Objects: length two chain complexes of vector bundles $\xymatrix{C_0\ar[r]^\delta & C_1}$ over $M$ (for which $\delta$ covers the identity map on $M$).
		\item Morphisms: for $C^\bullet$ and $D^\bullet$ chain complexes over manifolds $M$ and $N$, morphisms of $\cchain$ are chain maps $f^\bullet: C^\bullet \to D^\bullet$
		\[\xymatrix{C^0 \ar[d]_{f^0} \ar[r]^{\delta^C} & C^1 \ar[d]^{f^1} \\ D^0 \ar[r]_{\delta^D} & D^1}\]
		such that $f^0$ and $f^1$ both cover a smooth map $f:M\to N$.

		\item 2-morphisms: for $f^\bullet,f'^\bullet:C^\bullet \to D^\bullet$ two morphisms, a 2-morphism is a chain homotopy
		\[\Omega:(f^\bullet:C^\bullet \to D^\bullet)\Rightarrow (g^\bullet : C^\bullet \to D^\bullet).\]  
		Explicitly, this is just a single map $\Omega:C^1\to D^0$ with $\delta^D\circ \Omega = g^1-f^1$ and $\Omega\circ \delta^C = g^0-f^0$.
	\end{itemize}

Composition is defined as follows:
\begin{itemize}
\item 1-morphisms are composed as chain maps; i.e., for $h^\bullet:= g^\bullet \circ f^\bullet$, take $h^i:= g^i\circ f^i$.  

\item 2-morphisms are composed vertically by sum.

\item Horizontally, we have the following operation: suppose we have 2-morphisms $\Psi:(f^\bullet:C^\bullet \to D^\bullet)\Rightarrow (g^\bullet : C^\bullet \to D^\bullet)$ and $\Omega:(k^\bullet:D^\bullet \to E^\bullet)\Rightarrow (\ell^\bullet:  D^\bullet \to E^\bullet)$.  For reference, these fit into the following diagram:
\[\xymatrix@C6em@R4em{C^0 \ar[r]^{\delta^C} \ar@<1.5pt>[d]^{f^0} \ar@<-1.5pt>[d]_{g^0} & C^1 \ar@<1.5pt>[d]^{f^1} \ar@<-1.5pt>[d]_{g^1} \ar[dl]_-\Psi \\ D^0 \ar[r]^{\delta^D} \ar@<1.5pt>[d]^{k^0} \ar@<-1.5pt>[d]_{\ell^0} & D^1 \ar@<1.5pt>[d]^{k^1} \ar@<-1.5pt>[d]_{\ell^1} \ar[dl]_-{\Omega} \\ E^0 \ar[r]_{\delta^E} & E^1}\]
Then the horizontal composition is given by 
\[\Omega'*\Omega:= \left(k^0\circ \Psi + \Omega\circ g^1\right) : (k^\bullet\circ f^\bullet)\Rightarrow (\ell^\bullet \circ g^\bullet)\]
\end{itemize}

For any two cochain complexes $C^\bullet$ and $D^\bullet$, the zero map from $C^1$ to $D^0$ is the identity 2-morphism.
\end{defn}

For completeness, let's check that this is indeed a 2-category.  The only non-trivial fact to check is the interchange law.  So consider the diagram of 2-morphisms

\[\xymatrix@C=8em{ C^\bullet \ar@/^2pc/[r]^(.45){f^\bullet} \ar[r]_(.45){g^\bullet} \ar@/_2pc/[r]_(.45){h^\bullet} \ar@/^2pc/[r]^(.5){}="a" \ar@{}[r]^(.5){}="b" \ar@/^-2pc/[r]^(.5){}="c" \ar@{=>}"a";"b"^\Phi \ar@{=>}"b";"c"^X &  D^\bullet \ar@/^2pc/[r]^(.45){k^\bullet} \ar[r]_(.45){\ell^\bullet} \ar@/_2pc/[r]_(.45){m^\bullet} \ar@/^2pc/[r]^(.5){}="d" \ar@{}[r]^(.5){}="e" \ar@/^-2pc/[r]^(.5){}="f" \ar@{=>}"d";"e"^\Psi \ar@{=>}"e";"f"^\Omega & E^\bullet}\]
Then the calculation
\begin{align*}
\Phi*\Psi+X*\Omega &= (k^0\circ \Phi+\Psi\circ g^1) + (l^0\circ X + \Omega\circ h^1)  \\
				   &= k^0 \circ \Phi + \Psi\circ (g^1-h^1) +\Psi\circ h^1 + (l^0-k^0)\circ X +k^0\circ X +\Omega\circ h^1 \\
				   &= k^0\circ \Phi + \Psi\circ (-(\delta^D\circ X)) +\Psi\circ h^1 +(\Psi\circ \delta^D)\circ X +k^0\circ X +\Omega\circ h^1  \\&= k^0\circ(\Phi+X)+(\Psi+\Omega)\circ h^1 \\ 
				   &= (\Phi+X)*(\Psi+\Omega)
\end{align*}
confirms that the vertical and horizontal compositions defined for $\cchain$ satisfy the interchange law.

The components of 2-term representations up to homotopy and their morphisms correspond to objects, morphisms, and 2-morphisms in $\cchain$:

\begin{lemma}\label{l:ruthtocchain}
Let $(\delta:E^0\to E^1,\lambda^0,\lambda^1,\Omega)$ be (the data corresponding to) a representation up to homotopy.  Then:
\begin{itemize}
\item $\lambda^0,\lambda^1$ induce a 1-morphism of $\cchain$ $\lambda^\bullet: s^*E^\bullet \to E^\bullet$ covering $t:G_1\to G_0$; and
\item $\Omega$ induces a 2-morphism of $\cchain$ $\Omega:p_1^*\lambda^\bullet \circ p_2^*\lambda^\bullet \Rightarrow m^*\lambda^\bullet$ covering $m:G_2\to G_1$.
\end{itemize}
Here, $p_i:G_2\to G_1$ are the natural projection maps and $m:G_2\to G_1$ is the multiplication map.

Similarly, if $(\phi^0,\phi^1,\mu)$ is (the data corresponding to) a map of representations up to homotopy from $(\delta:E^0\to E^1,\lambda^0,\lambda^1,\Omega)$ to $(\delta':E'^0\to E'^1,\lambda'^0,\lambda'^1,\Omega')$, then:
\begin{itemize}
\item $\phi^0,\phi^1$ induces to a 1-morphism of $\cchain$ $\phi^\bullet$ covering $\id_M$; and 
\item $\mu$ corresponds to a 2-morphism of $\cchain$ $\mu:\phi^\bullet \circ \lambda^\bullet \Rightarrow \lambda'^\bullet \circ \phi^\bullet$ covering $t:G_1\to G_0$.   
\end{itemize}
\end{lemma}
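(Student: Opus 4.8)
The plan is to prove both halves by unpacking the definition of a $1$-morphism and a $2$-morphism in $\cchain$ and checking, fiber by fiber over the appropriate base, that the resulting equations are exactly (a subset of) the defining equations of Propositions \ref{propruth} and \ref{propmapofruth}. The only genuine work is bookkeeping: identifying the base map each structure covers and the complexes that arise as pullbacks, so that the composites named in the statement actually land in $\cchain$.

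For the first half, I first view $\lambda^0_g\colon E^0_{s(g)}\to E^0_{t(g)}$ and $\lambda^1_g\colon E^1_{s(g)}\to E^1_{t(g)}$ as bundle maps $s^*E^0\to E^0$ and $s^*E^1\to E^1$ covering $t\colon G_1\to G_0$; the assertion that $\lambda^\bullet$ is a morphism of $\cchain$ is then precisely the commuting-square (chain-map) identity $\lambda^1_g\circ\delta=\delta\circ\lambda^0_g$, which is the first equation of Proposition \ref{propruth}. Next I unwind the two composites: over $(g_1,g_2)\in G_2$ one has $(p_1^*\lambda^\bullet\circ p_2^*\lambda^\bullet)^i=\lambda^i_{g_1}\circ\lambda^i_{g_2}$ and $(m^*\lambda^\bullet)^i=\lambda^i_{g_1g_2}$, so both are morphisms from $s_2^*E^\bullet$ to $E^\bullet$ covering $t_2$, and a $2$-morphism between them is precisely a section $\Omega$ of $\Hom(s_2^*E^1,t_2^*E^0)$, i.e.\ a transformation cochain. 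Its two defining equations in $\cchain$, namely $\delta\circ\Omega_{g_1,g_2}=\lambda^1_{g_1g_2}-\lambda^1_{g_1}\circ\lambda^1_{g_2}$ and $\Omega_{g_1,g_2}\circ\delta=\lambda^0_{g_1g_2}-\lambda^0_{g_1}\circ\lambda^0_{g_2}$, are exactly the third and second equations of Proposition \ref{propruth}.

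The second half is entirely parallel. The condition that $\phi^\bullet$ be a morphism of $\cchain$ covering $\id_{G_0}$ is the chain-map identity $\phi^1\circ\delta=\delta'\circ\phi^0$, the first equation of Proposition \ref{propmapofruth}. Over $g\in G_1$ the two composites are $(\phi^\bullet\circ\lambda^\bullet)^i=\phi^i\circ\lambda^i_g$ and $(\lambda'^\bullet\circ\phi^\bullet)^i=\lambda'^i_g\circ\phi^i$, both morphisms $s^*E^\bullet\to E'^\bullet$ covering $t$, and a $2$-morphism between them is a section $\mu$ of $\Hom(s^*E^1,t^*E'^0)$ whose two defining equations reproduce the second and third equations of Proposition \ref{propmapofruth}. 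Smoothness of all the induced maps is automatic, since $\lambda^\bullet$, $\Omega$, $\phi^\bullet$, and $\mu$ are smooth by hypothesis; likewise the unitality of the quasi-actions and normalization of $\Omega$ play no role in this lemma.

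Two points deserve care rather than difficulty. First, the $\cchain$ sign convention fixes a $2$-morphism $f^\bullet\Rightarrow g^\bullet$ to satisfy $\delta\circ\Omega=g^1-f^1$, so I must orient the $2$-morphisms (and, if necessary, absorb a sign into $\mu$) to make the stated directions $p_1^*\lambda^\bullet\circ p_2^*\lambda^\bullet\Rightarrow m^*\lambda^\bullet$ and $\phi^\bullet\circ\lambda^\bullet\Rightarrow\lambda'^\bullet\circ\phi^\bullet$ agree with the equations above. Second, the fourth equations of both propositions are never used here: they are the coherence (cocycle) conditions that do not enter the statement that $\lambda^\bullet,\Omega,\phi^\bullet,\mu$ define morphisms and $2$-morphisms of $\cchain$, and—as anticipated in Remarks \ref{r:ruth} and \ref{r:mapofruth}—they will instead resurface as the pseudofunctor and naturality coherence once these data are assembled into a weak action. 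The only real obstacle, then, is the careful tracking of pullbacks and base maps needed to see that the composites appearing in the statement are legitimate $\cchain$ morphisms in the first place.
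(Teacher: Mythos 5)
Your proof is correct and takes the same route as the paper, which simply asserts that the lemma follows by direct verification from Propositions \ref{propruth} and \ref{propmapofruth}; you carry out exactly that verification, with the right identification of pullback complexes and base maps and the correct observation that the fourth equations and the sign convention on $\mu$ need separate attention. No gaps.
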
 

\begin{proof}
This is simple to verify from the conditions established in Propositions \ref{propruth} and \ref{propmapofruth}; see also Remarks \ref{r:ruth} and \ref{r:mapofruth}.
\end{proof}

Now, we may define an equivalence of 2-categories between $\cchain$ and $\VB$.  On objects, we use the following construction.

\begin{construction}\label{const:sumgpoid}
Let $\delta:C^0\to C^1$ be a cochain complex over the manifold $M$.  Then we may build a VB-groupoid $V$ over $M$ as follows:
\begin{itemize}
\item The objects of $V$ are just the vector bundle $C^1$.
\item The morphisms of $V$ are the vector bundle $C^0\oplus C^1$.
\item The source map is the projection $p:C^0\oplus C^1\to C^1$ while the target map is the sum $\delta+p:C^0\oplus C^1 \to C^1$.
\item Multiplication is given as $(c_0,c_1)\cdot (c'_0,c'_1):= (c_0+c_0',c_1')$ (indeed, this choice of multiplication is more or less forced by the choice of source and target maps given above).
\item The identity map $u:C^1\to C^0\oplus C^1$ is the inclusion $u(c):=(0,c)$.
\item The inversion map $i:C^0\oplus C^1 \to C^0\oplus C^1$ is the map $i(c_0,c_1):=(-c_0,\delta(c_0)+c_1)$. 
\end{itemize}
\end{construction}

This construction extends to a strict 2-functor between $\cchain$ and $\VB$:

\begin{prop}\label{p:2-chainsandlLgbs}
Let $\Phi:\cchain\to \VB$ be the following (strict) 2-functor: 
\begin{itemize}
\item Objects: For $C^\bullet:=\left\{ \delta: C^0 \to C^1\right\}$, let $\Phi(C^\bullet)$ be the VB-groupoid \\$\gpoid{V}{C^1\oplus C^0}{C^1}$ built in Construction \ref{const:sumgpoid}.
\item 1-morphisms: given a chain map $f^\bullet:C^\bullet \to D^\bullet$ over a morphism $f:M\to N$, take $\Phi(f^\bullet)$ to be the functor which on objects is just the map $f^1$ and on morphisms is the map $f^0+f^1:C^0\oplus C^1 \to D^0 \oplus D^1$.
\item 2-morphisms: given a chain homotopy $\Omega:f^\bullet \Rightarrow g^\bullet$ between maps $f^\bullet, g^\bullet:C^\bullet \to D^\bullet$, let $\Phi(\Omega)$ be the natural transformation given by the map
\[
\Phi(\Omega):C^1 \longrightarrow D^0\oplus D^1,\;\;\;
\Phi(\Omega)(c):= (f^1(c),\Omega(c))
\]  
\end{itemize}
Then $\Phi$ is an equivalence of 2-categories.  In fact, for every $C^\bullet$, $D^\bullet\in \cchain(M)$, $\Phi$ induces an isomorphism of categories
$\Phi:\Hom(C^\bullet,D^\bullet)\to \Hom(\Phi(C^\bullet),\Phi(D^\bullet))$. 
\end{prop}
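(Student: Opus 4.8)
The plan is to invoke the standard criterion that a strict $2$-functor which is essentially surjective on objects and induces an equivalence of each hom-category is a biequivalence; since I will in fact produce a local \emph{isomorphism} of hom-categories, the only remaining ingredient will be essential surjectivity on objects. So I would proceed in three steps: (i) confirm $\Phi$ is a well-defined strict $2$-functor; (ii) prove that $\Phi\colon\Hom(C^\bullet,D^\bullet)\to\Hom(\Phi(C^\bullet),\Phi(D^\bullet))$ is an isomorphism of categories for each pair; and (iii) prove $\Phi$ is essentially surjective. Step (i) is the routine verification that Construction \ref{const:sumgpoid} produces a linear Lie groupoid bundle and that a chain map, resp.\ a chain homotopy, is sent to a map, resp.\ a natural transformation, of linear Lie groupoid bundles compatibly with both compositions; I would dispatch it by direct inspection, using the bookkeeping of Lemma \ref{l:ruthtocchain} as a template.

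For step (ii) the heart is an explicit reconstruction. Given a $1$-morphism $F\colon\Phi(C^\bullet)\to\Phi(D^\bullet)$ with components $F_0\colon C^1\to D^1$ and $F_1\colon C^0\oplus C^1\to D^0\oplus D^1$, I would first use compatibility of $F_1$ with the source $\tilde s$ (the projection) and with the unit $\tilde u(c)=(0,c)$ to force $F_1(c_0,c_1)=(f^0(c_0),f^1(c_1))$ for unique vector bundle maps $f^0\colon C^0\to D^0$ and $f^1:=F_0\colon C^1\to D^1$. Compatibility of $F_1$ with the target $\delta+p$ then reads exactly $\delta^D\circ f^0=f^1\circ\delta^C$, i.e.\ $f^\bullet$ is a chain map, and compatibility with multiplication becomes automatic. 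Thus $1$-morphisms correspond bijectively to chain maps with $F=\Phi(f^\bullet)$. For $2$-morphisms, a natural transformation $\alpha\colon\Phi(f^\bullet)\Rightarrow\Phi(g^\bullet)$ is a vector bundle map $\alpha\colon C^1\to D^0\oplus D^1$; the source/target conditions $\tilde s\alpha=F_0$ and $\tilde t\alpha=G_0$ force the $D^1$-component of $\alpha$ to equal $f^1$ and yield $\delta^D\circ\Omega=g^1-f^1$ for its $D^0$-component $\Omega$, while evaluating the naturality square of Definition \ref{d:VBoverM} on a general arrow $(c_0,c_1)$ collapses, after using the multiplication formula, to $\Omega\circ\delta^C=g^0-f^0$. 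These are precisely the two defining equations of a chain homotopy, so $\alpha=\Phi(\Omega)$ for a unique $\Omega$. Since $\Phi$ manifestly respects vertical composition, this gives the claimed isomorphism of categories.

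For step (iii), given an arbitrary linear Lie groupoid bundle $\pi\colon V\to M$ with structure maps $\tilde s,\tilde t,\tilde u$, I would build its core complex: set $C^1:=V_0$, $C^0:=\ker\tilde s\subseteq V_1$, and $\delta:=\tilde t|_{C^0}\colon C^0\to C^1$. Because $\tilde s$ is a surjective linear submersion split canonically by the unit (so $\tilde s\tilde u=\id$), the map $w\mapsto(w-\tilde u\tilde s w,\ \tilde s w)$ is a vector bundle isomorphism $V_1\to C^0\oplus C^1$; notably no choice of connection is required here, as the unit furnishes the splitting globally. I would then check that this isomorphism, together with $\id_{V_0}$ on objects, intertwines the structure maps of $V$ with those of $\Phi(\delta\colon C^0\to C^1)$ from Construction \ref{const:sumgpoid}. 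Source and target match by construction of $\delta$, and the substance is that the groupoid axioms of $V$ (unitality, associativity, and existence of inverses) together with linearity of all structure maps force the multiplication and inversion of $V$ into the normal form of Construction \ref{const:sumgpoid}. This is the bundle version of the Baez--Crans equivalence (Theorem 12, \cite{BC}).

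The genuinely substantive step is the rigidity in (iii): showing that the multiplication of an \emph{arbitrary} $V$ is determined by $\delta$ alone after the identification $V_1\cong C^0\oplus C^1$. The hom-category computations in (ii), though involving several compatibility checks, are mechanical once the components of $F$ and $\alpha$ are decomposed along the source/unit splitting. I would therefore concentrate effort on the multiplication-reconstruction, reducing it to the statement that a linear groupoid structure on $C^0\oplus C^1$ over $C^1$ with the prescribed source, target, and units admits a \emph{unique} compatible multiplication, which is exactly the normal form recorded in Construction \ref{const:sumgpoid}.
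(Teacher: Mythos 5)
Your proposal is correct and follows essentially the same route as the paper: the same core complex $C^0=\ker\tilde{s}$, $C^1=V_0$ with the unit-induced splitting of $V_1$, the same component decomposition of $1$- and $2$-morphisms, and the same rigidity argument that linearity of the multiplication together with the unit law forces the normal form of Construction \ref{const:sumgpoid}. The only cosmetic difference is that you kill the off-diagonal components of $F_1$ using source- and unit-preservation (making multiplication-preservation automatic), whereas the paper uses preservation of multiplication directly; both yield the same bijection with chain maps.
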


\begin{proof}
Recall that equivalences of 2-categories is an essentially surjective 2-functor that induces an equivalence of categories on $\Hom$ categories.

 To begin, let's show $\Phi$ is essentially surjective.  Suppose that $\pi:V\to M$ is a linear Lie groupoid bundle.  Let $\tilde{s},\tilde{t}:V_1\to V_0$ be the source and target maps of $V$ and let $\tilde{u}:V_0\to V_1$ be the unit map.  Define $C^1:=V_0$ and $C^0:=\ker(\tilde{s})$.  Note that $C^0$ is a well-defined subbundle of $V_1$ since $\tilde{s}$ must be full rank everywhere (indeed, it is a submersion and a map of vector bundles). 

 Next, note that $V_1$ splits as $C^0\oplus \tilde{u}(V_0)\cong C^0\oplus C^1$ and, with respect to this splitting, $\tilde{s},\tilde{t}|_{C^1}\equiv \mathrm{id}_{C^1}$.  Then for $\delta:=\tilde{t}|_{C^0}$ and $p:C^0\oplus C^1\to C^1$ the projection, $\tilde{s}=p$ and $\tilde{t}=\delta+p$.  

 Now, suppose $(c_0,c_1)$ and $(c'_0,c'_1)$ are composable arrows in $V_1$.  It follows that $t(c'_0,c'_1)=c'_1+\delta(c'_0)=c_1=s(c_0,c_1)$.  The product $(c_0,c_1)\cdot(c'_0,c'_1)$ splits as
 \[(c_0,c_1)\cdot(c'_0,c'_1)=(c_0, c_1')\cdot (0,c'_1)+(0, \delta(c_0))\cdot (c_0',0)\]
 Since $(0,c_1')=u(c_1')$ and $(0,\delta(c_0))=u(\delta(c_0))$, it follows that 
 \[(c_0,c_1)\cdot(c'_0,c'_1) = (c_0,c_1')+(c_0',0)=(c_0+c_0',c_1').\]
 Thus, the splitting above induces a smooth, invertible functor from $\Phi(\{\delta: C^0\to C^1\})$ to $V$.   

Let $C^\bullet=\{\delta^C: C^0 \to C^1\}$ and $D^\bullet=\{\delta^D:D^0\to D^1\}$ be two objects of $\cchain$.  Let $F:\Phi(C^\bullet)\to (D^\bullet)$ be any map of Lie groupoids in $\VB$.  Then the map on morphisms $F_1:C^0\oplus C^1 \to D^0\oplus D^1$ splits into four maps of vector bundles:
\begin{align*}
w:C^0 \to D^0 \;\;&\;\; x: C^0\to D^1 \\
y:C^1\to D^0 \;\;&\;\; z: C^1 \to D^1
\end{align*} 
that is, $F_1=(w+y)\oplus(x+z)$.  

Suppose that $(c_0,c_1)$ and $(c_0',c_1')$ are composable arrows in $\Phi(C^\bullet)$.  As $F$ is a functor, it must preserve multiplication.  On the one hand, we have
\[F((c_0,c_1)\cdot (c_0',c_1')) = F(c_0+c_0',c_1') = (w(c_0)+w(c_0')+y(c_1'),x(c_0)+x(c_0')+z(c_1'))\]
On the other, we have
\begin{align*}F(c_0,c_1)\cdot F(c_0',c_1') &= (w(c_0)+y(c_1),x(c_0)+z(c_1))\cdot (w(c_0')+y(c_1'),x(c_0')+z(c_1')) \\ &= (w(c_0)+y(c_1)+ w(c_0')+y(c_1'), x(c_0')+z(c_1'))\end{align*}
As this must hold for every pair of composable arrows, we must conclude $x=0$ and $y=0$. 

It is easy to check that, since $F$ must preserve source maps, $z=F_0$ (i.e., $F$ on objects) and, since $F$ must preserve target maps, $w:C^0\to D^0$ and $z:C^1\to D^1$ yield a chain map between $C^\bullet$ and $D^\bullet$.

Finally, note that it is clear that $\Phi$ induces an isomorphism on 2-morphisms; indeed, $(f^1,\Omega)$ is uniquely determined by $\Omega$.  If $\alpha:\Phi(f^\bullet) \Rightarrow \Phi(g^\bullet)$ is a natural transformation, then for any $c\in \Phi(C^\bullet)_0=C^1$, $\alpha(c)$ is an arrow with source $f^1(c)$ and target $g^1(c)$.  Thus, the arrow $\alpha(c)=(f^1(c),\bar{\alpha}(c))$ where $f^1(c)+\delta^D(\bar{\alpha}(c))=g^1(c)$.  Similarly, one may check that $\bar{\alpha}(\delta^C(c))=g^0(c)-f^0(c)$.  Thus, $\bar{\alpha}:f^\bullet\to g^\bullet$ is a chain homotopy.            
\end{proof}

\section{Weak representations of groupoids}\label{s:WReps}

In this section, we will define weak representations of a groupoid $G$.  We will also show that the isomorphism $\Phi:\cchain \to \VB$ given in Proposition \ref{p:2-chainsandlLgbs} induces an isomorphism between the categories of representations up to homotopy and weak representations.  

Weak representations are, in particular, weak actions of groupoids; the definition of a weak action is a repurposed version of Burszytyn, Noseda, and Zhu's definition of the action of a stacky Lie groupoid on a stack (see Definition 3.15, \cite{BNZ}).

\begin{defn}\label{d:waction}
Let $H$ be a Lie groupoid and $f:H\to G_0$ be a map of Lie groupoids.  Then a {\em weak action of $G$ on $H$} is
\begin{itemize}
\item a smooth functor $A:G\times_{G_0} H\to H$;
\item a smooth natural isomorphism 
\[\xymatrix@C8em{G_2\times_{G_0} H \ar@/^1pc/[r]^(0.5){}="a" \ar@/^1pc/[r]^{A\circ (\id_G\times A)} \ar@/_1pc/[r]_(0.5){}="b" \ar@/_1pc/[r]_{A\circ (m\times \id_H)} \ar@{=>}^\alpha"a";"b" & H}\]
i.e., a smooth collection of arrows of $H$ such that, for every morphism $(g,k,h):(g,k,x)\to (g,k,y)\in G_2\times_{G_0}H$, the diagram
\[\xymatrix{g\cdot k \cdot x \ar[d]_{\alpha(g,k,x)} \ar[r]^{g\cdot k\cdot h} & g\cdot k \cdot y \ar[d]^{\alpha(g,k,y)} \\ (gk)\cdot x \ar[r]_{(gk)\cdot h} & (gk)\cdot y}\]
commutes (where $g\cdot x:=A(g,x)$); and
\item a smooth natural isomorphism 
\[\xymatrix@C8em{H \ar@/^1pc/[r]^(0.5){}="a" \ar@/^1pc/[r]^{A\circ (u \circ f)} \ar@/_1pc/[r]_(0.5){}="b" \ar@/_1pc/[r]_{\id_H} \ar@{=>}^\varepsilon"a";"b" & H}\]
i.e., $\varepsilon$ naturally relates the action of units with the identity functor of $H$.
\end{itemize}  

$A$, $\alpha$, and $\varepsilon$ are subject to the following coherence conditions:
\begin{itemize}
\item $f\circ A = t\circ p$, for $p$ the natural projection $p:G\times_{G_0} H\to G$.
\item For all appropriate choices of $(g,k,l)\in G_3$ and $x\in H_0$, $\alpha:G_2\times_{G_0}H_0\to H_1$ satisfies 
\[\alpha(g,kl,x)\cdot g\cdot \alpha(k,l,x) = \alpha(gk,l,x)\alpha(g,k,l \cdot x)\]
More concretely, we require that the diagram
\begin{equation}\label{eq:alphacoh}\xymatrix{& g\cdot k\cdot l\cdot x \ar[dl]_{g\cdot \alpha(k,l,x)} \ar[dr]^{\alpha(g,k,l\cdot x)} & \\g\cdot (kl)\cdot x \ar[dr]_{\alpha(g,kl,x)} & & (gk)\cdot l \cdot x \ar[dl]^{\alpha(gk,l,x)} \\ & (gkl)\cdot x &}\end{equation} 
commutes.
\item For any $(g, x)\in G_1\times_{G_0} H_0$, $\alpha(g,u(f(x)),x)=g\cdot \epsilon(x)$ and $\alpha(u(t(g)),g,x)=\varepsilon(g\cdot x)$.

\end{itemize} 

A weak action of $G$ on $H$ is {\em unital} if $\varepsilon=\id_{\id_H}$.
\end{defn}

\begin{rem}
Let's unpack Definition \ref{d:waction} a bit.  The fact that $A:G_1\times_{G_0} H \to H$ is a functor subsumes two interesting properties.  First note that since $f:H\to G_0$ is a map of Lie groupoids, if $h_1$ and $h_2$ are composable arrows then $f_1(h_1)=f_1(h_2)$.  In particular, if $f_1(h_1)=s_G(g)$, then we have that $A_1(g, h_1\cdot h_2)=A(g,h_1)\cdot A(g,h_2)$.

Next, note that any unit $u_H(x)\in H_1$ and $g\in G_1$ satisfying $f_0(x)=s_G(g)$, we have that $A(g,u_H(x))=u_H(A(g,x))$.

If the action is unital, then $\alpha(g,u(f(x)),x)=\alpha(u(t(g)),g,x)=\id_x$ for all $(g,x)\in G_1\times_{G_0} H_0$.   

Finally, suppose additionally that $f_0,f_1$ are submersions.  Then for every $a\in G_0$, $f^{-1}(a)$ is a Lie subgroupoid.  Furthermore, each $g:a\to b$ in $G$ induces a map of Lie groupoids $A(g,\cdot):f^{-1}(a) \to f^{-1}(b)$.  In the case where $A$ is a {\em non-weak} action, note this functor is an isomorphism of categories with inverse functor $A(g^{-1},\cdot)$.  In the case of a weak action, $A(g,\cdot)$ and $A(g^{-1},\cdot)$ form an equivalence of categories.  Indeed, $A(g^{-1},\cdot)\circ A(g,\cdot)$ and $\id_{f^{-1}(a)}$ are naturally isomorphic via the smooth natural isomorphism 
\[\varepsilon\circ \alpha(g^{-1},g,\cdot) : A(g^{-1},\cdot)\circ A(g,\cdot) \Rightarrow \id_{f^{-1}(a))}, \;\;\; x\mapsto \varepsilon(x)\cdot \alpha(g^{-1},g,x) \]   
\end{rem}

To define equivariant maps, we again refer to the work of Bruszytyn, Noseda, and Zhu (see Definition 3.20, \cite{BNZ}):

\begin{defn}\label{d:equivmap}
Let $f:H\to G_0$ and $f':H'\to G_0$ be two Lie groupoid maps to $G_0$ and let $(A,\alpha,\varepsilon)$ and $(A',\alpha',\varepsilon')$ be weak actions of $G$ on $H$ and $H'$, respectively.  Then a map of Lie groupoids $F:H\to H'$ is {\em equivariant} if $f'\circ F=f$ and if there exists there exists a natural isomorphism $\delta:F\circ A\Rightarrow A'\circ (\id_{G_1} \times F)$ such that, for all $(g,k,x)\in G_2\times_{G_0}H_0$, the diagrams
\begin{equation}\xymatrix@C=0.5em@R3em{& & F(g\cdot k \cdot x) \ar[dll]_{\delta(g,k\cdot x)} \ar[drr]^{F(\alpha(g,k,x))} & & \\ g\cdot F(k\cdot x)\ar[dr]_{g\cdot \delta(k,x))} & & & & F((gk)\cdot x) \ar[dl]^{\delta(gk,x)} \\ & g\cdot k \cdot F(h) \ar[rr]_{\alpha'(g,k,F(x))} & & (gk)\cdot F(x)  }\end{equation}
and
\[\xymatrix@C7em@R5em{F((f(x))\cdot x) \ar[r]^-{F(\varepsilon(x))} \ar[d]_{\delta(u(f(x)), x)} & F(x) \\ u(f(x))\cdot F(x) \ar[ur]_{\varepsilon'(F(x))} }\]
 
commute.

\end{defn}

\begin{prop}
The composition $(F'\circ F, (\id_{F'}\circ \delta)*(\delta'\circ \id_{F\times \id}))$ of two equivariant maps $(F,\delta)$ and $(F',\delta')$ is again an equivariant map.
\end{prop}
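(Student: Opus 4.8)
The plan is to recognize the asserted composite as the composition of pseudonatural transformations and to verify the two coherence diagrams of Definition \ref{d:equivmap} by pasting together the corresponding diagrams for $(F,\delta)$ and $(F',\delta')$. Write $(A,\alpha,\varepsilon)$, $(A',\alpha',\varepsilon')$, $(A'',\alpha'',\varepsilon'')$ for the weak actions on $H$, $H'$, $H''$ and $f,f',f''$ for the anchors, so that $F:H\to H'$ and $F':H'\to H''$. First I would unwind the whiskered 2-cell $\tilde\delta:=(\id_{F'}\circ\delta)*(\delta'\circ\id_{F\times\id})$ into its components: for $(g,x)\in G_1\times_{G_0}H_0$,
\[\tilde\delta(g,x)=\delta'(g,F(x))\circ F'(\delta(g,x)),\]
an arrow of $H''$ from $(F'F)(g\cdot x)$ to $g\cdot(F'F)(x)$. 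The cheap checks come first: $F'\circ F$ is a smooth functor; $f''\circ F'\circ F=f'\circ F=f$; and $\tilde\delta$ is a natural isomorphism because it is a vertical composite of whiskerings of the natural isomorphisms $\delta$ and $\delta'$.

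Next I would verify the hexagon of Definition \ref{d:equivmap}. Substituting the component formula into the target hexagon for $(F'F,\tilde\delta)$ and using that $F'$ is a functor, the term $F'(\delta(gk,x))\circ F'(F(\alpha(g,k,x)))$ becomes $F'$ applied to $\delta(gk,x)\circ F(\alpha(g,k,x))$; the hexagon for $(F,\delta)$ rewrites this as $F'$ applied to $\alpha'(g,k,F(x))\circ(g\cdot\delta(k,x))\circ\delta(g,k\cdot x)$. Then I would apply the hexagon for $(F',\delta')$ at the object $F(x)\in H'_0$ to absorb $\delta'(gk,F(x))\circ F'(\alpha'(g,k,F(x)))$ into $\alpha''(g,k,F'F(x))\circ(g\cdot\delta'(k,F(x)))\circ\delta'(g,k\cdot F(x))$. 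What remains is the identity
\[\delta'(g,k\cdot F(x))\circ F'(g\cdot\delta(k,x))=(g\cdot F'(\delta(k,x)))\circ\delta'(g,F(k\cdot x)),\]
which is precisely the naturality square of $\delta'$ for the arrow $\delta(k,x):F(k\cdot x)\to k\cdot F(x)$ of $H'$ (with the $G$-slot held fixed at $g$); using that $A''(g,-)$ is a functor to split $g\cdot\tilde\delta(k,x)$ then reassembles the right-hand side of the hexagon for the composite.

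Finally, the unit triangle is handled the same way: expanding $\tilde\delta(u(f(x)),x)=\delta'(u(f(x)),F(x))\circ F'(\delta(u(f(x)),x))$ and using $f'(F(x))=f(x)$, the triangle for $(F',\delta')$ at $F(x)$ gives $\varepsilon''(F'F(x))\circ\delta'(u(f(x)),F(x))=F'(\varepsilon'(F(x)))$, after which functoriality of $F'$ and the triangle for $(F,\delta)$ collapse the composite to $(F'F)(\varepsilon(x))$. The only genuine obstacle is bookkeeping: reading off the source and target of each whiskered 2-cell so that the pastings are actually composable, and in particular spotting that the leftover middle square is an instance of the naturality of $\delta'$ rather than a new constraint. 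Everything else is formal 2-categorical algebra — exactly the assertion that pseudonatural transformations compose.
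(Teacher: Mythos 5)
Your argument is correct, and it is worth pointing out that the paper does not actually prove this proposition: its ``proof'' is a citation to Lemma 3.21 of \cite{BNZ}, so your write-up supplies the verification that the paper outsources. Your reading of the composite 2-cell is the right one --- the component formula $\tilde\delta(g,x)=\delta'(g,F(x))\circ F'(\delta(g,x))$ is exactly the vertical composite of the two whiskerings --- and both pastings go through as you describe. In particular you correctly isolate the one non-formal step: after applying the hexagon for $(F,\delta)$ inside $F'$ and the hexagon for $(F',\delta')$ at the object $F(x)$, the residual identity
\[
\delta'(g,k\cdot F(x))\circ F'\bigl(g\cdot\delta(k,x)\bigr)=\bigl(g\cdot F'(\delta(k,x))\bigr)\circ\delta'(g,F(k\cdot x))
\]
is precisely the naturality square of $\delta'$ at the arrow $(g,\delta(k,x))$ of $G_1\times_{G_0}H'$ --- the arrows of this fiber product keep the $G$-coordinate fixed, which is why the $G$-slot is held at $g$ as you say --- and splitting $g\cdot\tilde\delta(k,x)$ into $(g\cdot\delta'(k,F(x)))\circ(g\cdot F'(\delta(k,x)))$ uses that $A''(g,\cdot)$ is a functor. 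The unit-triangle computation, using $f'\circ F=f$ to identify $u(f(x))$ with $u(f'(F(x)))$ before invoking the triangle for $(F',\delta')$, is likewise correct. The only addition I would ask for is an explicit remark that $\tilde\delta$ is smooth (it is a composite of the smooth maps $\delta$, $F'_1$, $\delta'$, and multiplication in $H''$), since the proposition lives in the category of Lie groupoids and the coherence checks alone only establish the set-theoretic statement.
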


\begin{proof}
See Lemma 3.21, \cite{BNZ}.
\end{proof}

Now, we may define weak representations of a groupoid $G$.  As is the case with groupoid actions, weak representations are groupoid actions on smooth linear spaces.  In this case, we use linear Lie groupoid bundles:

\begin{defn}
A {\em weak representation of $G$} on a linear groupoid bundle $\pi:V\to G_0$ is a weak unital action $(A,\alpha,\id_V)$ such that $A:G_1\times_{G_0}V\to V$ is a map of linear Lie groupoid bundles covering $t:G_1\to G_0$ and $\alpha$ is a natural transformation of linear groupoids covering $t_2:G_2\to G_0$.

A {\em map of weak representations of $G$} is any map of linear groupoid bundles which is also equivariant.

From here forward, denote by $\WR(G)$ the category of weak representations of $G$.  
\end{defn}

\begin{rem}
It is a well known fact that a Lie groupoid representation $G$ on a vector bundle $E\to G_0$ corresponds exactly to a smooth functor from $G$ to the frame groupoid of $E$: the Lie groupoid with objects $G_0$ and morphisms linear isomorphisms between the fibers of $E$.  

We may make a similar statement for weak representations.  Let $\pi:V\to G_0$ be a linear Lie groupoid bundle.  Let ${\sf Fr}(V)$ denote the 2-category with objects $G_0$, morphisms linear equivalences of categories $\varphi:\pi^{-1}(x)\to \pi^{-1}(y)$, and 2-morphisms linear natural isomorphisms.  It is easy to check then that a weak representation corresponds to a pseudofunctor from $G$ (thought of as a 2-category with 2-morphisms $G_2$, the collection of composable morphisms) to ${\sf Fr}(V)$.  

Going from pseudofunctors $F:G\to {\sf Fr}(V)$ to weak representations is more challenging.  One should expect that it is smooth pseudofunctors that should correspond to weak representations.  However, ${\sf Fr}(V)$ needn't be a Lie 2-groupoid; indeed, one may construct examples where the source and target maps associated to ${\sf Fr}(V)$ will fail to be submersion for any possible manifold structure on the morphisms of ${\sf Fr}(V)$.

One possible way around this is the use of diffeologies; this approach may be explored in future work.
\end{rem}

Now, we may prove the equivalence between the categories of 2-term representations up to homotopy of $G$ and weak representations of $G$.

\begin{thm}\label{t:ruthsandwr}
The equivalence of 2-categories $\Phi:\cchain\to \VB$ induces an equivalence of categories $\Phi(G):\ruth(G)\to \WR(G)$.
\end{thm}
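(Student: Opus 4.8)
The plan is to construct $\Phi(G)$ by feeding the $\cchain$-data extracted in Lemma \ref{l:ruthtocchain} through the strict $2$-functor $\Phi$, and then to argue that every coherence condition transports on the nose precisely because $\Phi$ is strict and induces isomorphisms on hom-categories. On objects, I would start from a representation up to homotopy $(\delta:E^0\to E^1,\lambda^0,\lambda^1,\Omega)$, which Lemma \ref{l:ruthtocchain} repackages as the $1$-morphism $\lambda^\bullet:s^*E^\bullet\to E^\bullet$ covering $t$ together with the $2$-morphism $\Omega:p_1^*\lambda^\bullet\circ p_2^*\lambda^\bullet\Rightarrow m^*\lambda^\bullet$. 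Since $\Phi$ is defined fiberwise (Construction \ref{const:sumgpoid}), it commutes with pullback along the structure maps $s,t,m,p_i$; in particular $\Phi(s^*E^\bullet)=s^*\Phi(E^\bullet)$, whose total space is exactly $G_1\times_{G_0}\Phi(E^\bullet)$. Writing $V:=\Phi(E^\bullet)$, this makes $A:=\Phi(\lambda^\bullet)$ a map of linear Lie groupoid bundles $G_1\times_{G_0}V\to V$ covering $t$, and $\alpha:=\Phi(\Omega)$ a natural transformation. Using the explicit formula $\Phi(\Omega)(c)=(\lambda^1_g\lambda^1_h(c),\Omega_{g,h}(c))$ together with the third relation of Proposition \ref{propruth}, one checks directly that $\alpha(g,h,-)$ is an arrow from $A_gA_h$ to $A_{gh}$, as a weak action demands.

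The key observation is that the fourth relation of Proposition \ref{propruth}, namely $\lambda^0_{g_1}\circ\Omega_{g_2,g_3}-\Omega_{g_1g_2,g_3}+\Omega_{g_1,g_2g_3}-\Omega_{g_1,g_2}\circ\lambda^1_{g_3}=0$, is nothing but the pentagon/cocycle identity for $\Omega$ regarded as the structure $2$-cells of a weak action \emph{internal to} $\cchain$: expanding the horizontal-composition law $\Omega'*\Omega=k^0\circ\Psi+\Omega\circ g^1$ of $\cchain$ on the two passages from $\lambda_{g_1}\lambda_{g_2}\lambda_{g_3}$ to $\lambda_{g_1g_2g_3}$ recovers exactly this relation. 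Because $\Phi$ is a \emph{strict} $2$-functor it preserves horizontal and vertical composition of $2$-cells verbatim, so this identity is carried to the corresponding pentagon for $\alpha=\Phi(\Omega)$, which is precisely the coherence diagram \eqref{eq:alphacoh}. Unitality transports likewise: unital quasi-actions give $A_{u(x)}=\id$, and the normalization of $\Omega$ forces $\alpha(g,u,x)=\id$ and $\alpha(u,g,x)=\id$, so one may take $\varepsilon=\id_V$. Hence $\Phi(G)(\delta,\lambda^0,\lambda^1,\Omega):=(V,A,\alpha,\id_V)$ is a weak representation.

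On morphisms I would proceed identically. A map $(\phi^0,\phi^1,\mu)$ becomes, via Lemma \ref{l:ruthtocchain}, a $1$-morphism $\phi^\bullet$ over $\id$ and a $2$-morphism $\mu:\phi^\bullet\circ\lambda^\bullet\Rightarrow\lambda'^\bullet\circ s^*\phi^\bullet$ covering $t$; applying $\Phi$ gives $F:=\Phi(\phi^\bullet)$ and the comparison cell $\Phi(\mu):F\circ A\Rightarrow A'\circ(\id_{G_1}\times F)$, here using $s^*F=\id_{G_1}\times F$. Strictness of $\Phi$ then transports the fourth relation of Proposition \ref{propmapofruth} to the coherence diagram of Definition \ref{d:equivmap}, while the normalization and unit relations yield the remaining $\varepsilon$-triangle; so $(F,\Phi(\mu))$ is an equivariant map. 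Strictness also guarantees that $\Phi(G)$ preserves identities and composition, so it is a functor. Finally, $\Phi(G)$ is an equivalence: by Proposition \ref{p:2-chainsandlLgbs}, $\Phi$ is essentially surjective and induces isomorphisms $\Hom(C^\bullet,D^\bullet)\xrightarrow{\sim}\Hom(\Phi(C^\bullet),\Phi(D^\bullet))$ on every hom-category, so each passage above (object data, morphism data, and coherence equations) is a bijection; inverting $\Phi$ on hom-categories reconstructs the representation-up-to-homotopy data from any weak representation and its maps, giving essential surjectivity and full faithfulness.

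The step I expect to be the main obstacle is the careful verification that the abstract pentagon for $\Omega$ in $\cchain$, once the horizontal-composition formula of $\cchain$ is fully expanded, coincides literally with the fourth relation of Proposition \ref{propruth}, and dually that its image under $\Phi$ coincides literally with \eqref{eq:alphacoh} (and similarly for the morphism coherence). This is where all the bookkeeping of base maps, pullbacks along $s,t,m,p_i$, and signs must be handled exactly; everything else is formal once $\Phi$ is known to be a strict $2$-equivalence inducing isomorphisms on hom-categories.
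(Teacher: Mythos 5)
Your proposal is correct and follows essentially the same route as the paper: use Lemma \ref{l:ruthtocchain} to repackage the data of a 2-term representation up to homotopy (resp.\ its morphisms) as 1- and 2-cells of $\cchain$, push them through the strict 2-functor $\Phi$, check that the relations of Propositions \ref{propruth} and \ref{propmapofruth} translate into the coherence conditions of Definitions \ref{d:waction} and \ref{d:equivmap}, and then deduce essential surjectivity and full faithfulness from the fact that $\Phi$ is essentially surjective and an isomorphism on $\Hom$-categories. The paper's proof leaves the correspondence of coherence conditions as "easy to verify," whereas you spell out the key point (the fourth relation of Proposition \ref{propruth} as the cocycle identity for $\Omega$ under the horizontal composition of $\cchain$, carried by strictness of $\Phi$ to diagram \eqref{eq:alphacoh}); this is a welcome elaboration, not a different argument.
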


\begin{proof}
Define $\Phi(G)(\delta:E^0\to E^1,\lambda^0,\lambda^1,\Omega):=(\Phi(\delta:E^0\to E^1), \Phi(\lambda^\bullet), \Phi(\Omega))$; here we are using Lemma \ref{l:ruthtocchain} to identify the components associated to $(\delta:E^0\to E^1,\lambda^0,\lambda^1,\Omega)$ with elements of $\cchain$.  Similarly, define $\Phi(G)(\phi^0,\phi^1,\mu):=(\Phi(\phi^\bullet), \Phi(\mu))$.  It is easy to verify that the conditions of Propositions \ref{propruth} and \ref{propmapofruth} (i.e., on the data associated to representations up to homotopy and their maps) correspond exactly to the conditions on weak actions and equivariant maps (see Definitions \ref{d:waction} and \ref{d:equivmap}).   Thus, $\Phi(G)$ is well defined and, since $\Phi$ is essentially surjective and induces an isomorphism on $\Hom$ categories, $\Phi(G)$ must be essentially surjective and fully faithful.
\end{proof}

\section{VB-groupoids as weak action groupoids}\label{s:VBsandWR}

In this section, we discuss the action groupoids associated to weak groupoid actions.  We will also explain how VB-groupoids may be realized as action groupoids associated to weak representations.  

\begin{construction}
Let $f:H\to G_0$ be a map of Lie groupoids $G_0$ and let $(A,\alpha, \varepsilon)$ be a weak action of $G$ on $H$.  Write $\tilde{s},\tilde{t}:H_1\to H_0$ and $\tilde{u}:H_0\to H_1$ for the source, target, and unit maps of $H$.  Then {\em the action groupoid $G\times_A H$} is the groupoid with

\begin{itemize}\label{c:actiongpoid}
	\item Objects: $H_0$
	\item Morphisms: For $A_0:G_1\times_{G_0}H_0\to H_0$ the action functor $A$ on objects, the morphisms of $G\times_A H$ are 
	\begin{equation*} \left(G_1\times_{s,{G_0},f_0}H_0\right) \times_{A_0,H_0,\tilde{t}}H_1 =\{(g,x,h)\in G_1\times H_0\times H_1 \;| \; s(g)=\pi_0(x),\; \tilde{t}(h)=g\cdot x\} \end{equation*} 
	\item Source and target: the source map $ \sigma: (G\times_A H)_1 \to H_0$ is the projection $\sigma(g,x,h):=x$ while the target map $\tau:(G\times_AH)_1\to H_0$ is the map $\tau(g,x,h):=s(h)$.
	\item Multiplication: multiplication is given by 
	\[(g,x,h)\cdot (g',x',h'):= (gg', x',\alpha(g,g',x')(g\cdot h') h)\]
	\item Units: the unit map is given by $\mu(x):= (u(f(x)), x, \tilde{u}(x))$.
	\item Inverses: define $(g,x,h)^{-1}:=(g^{-1}, s(h), (g^{-1}\cdot h)^{-1} \alpha(g^{-1},g,x)^{-1} \varepsilon(x)^{-1})$     
\end{itemize}

For the sake of completeness, let's verify $G\times_A H$ is indeed a Lie groupoid.  First, note that the objects and morphisms indeed must be manifolds; on objects, we are already using $H_0$ and on morphisms, we are pulling back against submersions $s:G_1\to G_0$ and $\tilde{t}:H_1\to H_0$.  As projections, $\sigma$ and $\tau$ are certainly submersions.  As all structure maps are smooth combinations of the structure maps of $G$ and $H$, it follows that the structure maps of $G \times_A H$ are smooth as well.

To see that multiplication is associative, note that:
\begin{align*}
((g,x,h) (g',x',h'))  (g'',x'',h'') &= (gg'g'', x'', \alpha(gg',g'',x'')((gg')\cdot h'')\alpha(g,g',x')(g\cdot h')h) \\
&= (gg'g'',x'', \alpha(gg',g'',x'')\alpha(g,g',g''\cdot x'') (g\cdot g'\cdot h'')(g\cdot h')h)\\
&= (gg'g'',x'', \alpha(g,g'g'',x'')(g\cdot \alpha(g',g'',x'')) (g\cdot g'\cdot h'')(g\cdot h')h)\\
&= (gg'g'',x'', \alpha(g,g',g'',x'') (g\cdot(\alpha(g',g'',x'')(g'\cdot h'')h'h)))\\
&= (g,x,h) ((g',x',h')  (g'',x'',h''))
\end{align*}
Here, we take advantage of the fact that $\alpha$ is a natural transformation, the coherence condition \eqref{eq:alphacoh} on $\alpha$, and the functoriality of $A$.  

Using a similar argument also involving the compatibility conditions on $\alpha$ and between $\alpha$ and $\varepsilon$ (see Definition \ref{d:waction}), one may also check that the units and inverses defined above behave as required. 
\end{construction}

\begin{rem}
As noted above, weak groupoid actions are inspired by the definition of a stacky Lie groupoid actions of Bursztyn, Noseda, and Zhu \cite{BNZ}.  Similarly, the construction of an action groupoid associated to a weak groupoid action is also inspired by their work.  As they note in Corollary 5.3, under certain conditions, the map of stacks associated to a stacky Lie groupoid action is a bibundle inheriting a Lie groupoid structure.  

Note that the space of morphisms of $G\times_A H$ in Construction \ref{c:actiongpoid} is exactly the (total space of) the bibundle $\langle A \rangle$ associated to the action functor $A$.  Multiplication in $G\times_AH$ is also related to the bibundle picture; indeed, one may identify the collection of composable arrows of $G\times_AH$ with the bibundle $\langle A\circ (\id_G\times A)\rangle$.  Then $\alpha$ induces an isomorphism of bibundles 
\[\langle \alpha \rangle:\langle A\circ (\id_G\times A)\rangle\to \langle A\circ (m\times \id_H)\rangle\]
and one may check that, on the arrow component $H_1$ of $G \times_A H$, the multiplication defined above corresponds to the composition of $\langle \alpha \rangle$ with projection to $H_1$.  
\end{rem}

Action groupoids of weak representations are naturally VB-groupoids.

\begin{lemma}\label{l:WRactiongpoids}
Let $\pi:V\to G_0$ be a linear Lie groupoid bundle and let $(A:G_1\times_{G_0}V\to V, \alpha)$ be a weak representation of $G$ on $V$.  Then the action groupoid $G\times_A V$ is a VB-groupoid.  Furthermore, for $(B:G_1\times_{G_0}W\to W, \beta)$ another weak representation of $G$ on a linear Lie groupoid bundle $\varpi:W\to G_0$, any map of weak representations 
\[(F,\delta): (A:G_1\times_{G_0}V\to V, \alpha) \to (A:G_1\times_{G_0}V\to V, \alpha)\]
extends to a map of VB-groupoids $\act(F,\delta) : G\times_A V \to G\times_BW$.
\end{lemma}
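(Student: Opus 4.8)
The plan is to first equip the action groupoid $G\times_A V$ from Construction \ref{c:actiongpoid} with its VB-groupoid structure over $G$, and then check that $\act$ is functorial. Write $A_0,A_1$ (resp.\ $B_0,B_1$) for the object and arrow parts of the action $A$ (resp.\ $B$) and set $g\cdot_A x:=A_0(g,x)$, $g\cdot_A h:=A_1(g,h)$ (similarly for $B$). Since $A$ is a map of linear Lie groupoid bundles covering $t$, each $A_0(g,\cdot)\colon (V_0)_{s(g)}\to (V_0)_{t(g)}$ and each $A_1(g,\cdot)$ is fiberwise linear, and the same holds for $\alpha(g,g',\cdot)$ because $\alpha$ is a natural transformation of linear groupoids. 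I will use repeatedly the following device: in a linear Lie groupoid bundle the $n$-fold composition map is a morphism of vector bundles on the subbundle of composable $n$-tuples (the latter is cut out of $V_1^{\times n}$ by the linear equations relating $\tilde s$ and $\tilde t$, and composition is a vector-bundle map), so any arrow-valued expression built by composing finitely many maps that are linear in the data and whose values are always composable is itself linear. With this in hand I would produce the projection $P\colon G\times_A V\to G$, given by $\pi_0\colon V_0\to G_0$ on objects and $(g,x,h)\mapsto g$ on arrows; its functoriality is read off from Construction \ref{c:actiongpoid}, and $P_0=\pi_0$ is a vector bundle by hypothesis. For the arrow bundle I would exhibit $(G\times_A V)_1$ as the kernel of the morphism of vector bundles over $G_1$
\[\Theta\colon s^*V_0\oplus t^*V_1\longrightarrow t^*V_0,\qquad \Theta(g,x,h):=g\cdot_A x-\tilde t(h).\]
Both $A_0(g,\cdot)$ and $\tilde t$ are linear, so $\Theta$ is a vector-bundle map; since $\tilde t$ is already fiberwise surjective, $\Theta$ is fiberwise surjective, hence of constant rank, and therefore $\ker\Theta=(G\times_A V)_1$ is a subbundle. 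Thus $P_1$ is a vector bundle.

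Next I would verify that every structure map is a vector-bundle map covering the matching structure map of $G$. The source $\sigma(g,x,h)=x$ and target $\tau(g,x,h)=\tilde s(h)$ are projections followed by the linear maps $\id$ and $\tilde s$, the unit $\mu(x)=(u(f(x)),x,\tilde u(x))$ is linear because $\tilde u$ is, and these cover $s$, $t$, $u$. The only substantial cases are multiplication and inversion. For multiplication the composable arrows again form a vector bundle over $G_2$ (a kernel of a constant-rank vector-bundle map), and the nontrivial third component $\alpha(g,g',x')\,(g\cdot_A h')\,h$ is the triple composition in $V$ applied to $(\alpha(g,g',x'),A_1(g,h'),h)$, whose entries are linear in $(x',h',h)$ and always composable; by the device above it is linear. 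Inversion is linear for the same reason, using $\varepsilon=\id_V$. This proves $G\times_A V$ is a VB-groupoid over $G$.

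For the second assertion I would set $\act(F,\delta)$ equal to $F_0$ on objects and
\[\act(F,\delta)(g,x,h):=\bigl(g,\,F_0(x),\,\delta(g,x)\cdot F_1(h)\bigr)\]
on arrows. This is well defined because $F_1(h)$ has target $F_0(g\cdot_A x)$ while $\delta(g,x)\colon F(g\cdot_A x)\to g\cdot_B F_0(x)$, so the composite has target $g\cdot_B F_0(x)$, as a morphism of $G\times_B W$ requires; it is a vector-bundle map covering $\id_G$ since $F_0$ is linear, $\varpi\circ F=\pi$ leaves the $G$-component fixed, and $\delta(g,x)\cdot F_1(h)$ is linear in $(x,h)$ by the composition device ($\delta(g,\cdot)$ and $F_1$ linear, factors always composable). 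Functoriality is where the equivariance data enters: preservation of source and target is immediate, preservation of units reduces to $\delta(u(f(x)),x)=\tilde u(F_0(x))$, which is precisely the second coherence diagram of Definition \ref{d:equivmap} with $\varepsilon=\id$, and preservation of inverses then follows automatically for a functor between groupoids.

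The step I expect to be the main obstacle is the verification that $\act(F,\delta)$ preserves multiplication, which after expanding both products reduces to the identity
\[\delta(gg',x')\,F_1\bigl(\alpha(g,g',x')\,(g\cdot_A h')\,h\bigr)=\beta\bigl(g,g',F_0(x')\bigr)\,\bigl(g\cdot_B(\delta(g',x')F_1(h'))\bigr)\,\bigl(\delta(g,x)F_1(h)\bigr)\]
in $W_1$. I would establish it by a diagram chase combining three ingredients: functoriality of $F$ (to distribute $F_1$ across the composite and past units), naturality of $\delta$ applied to the arrow $h'$ (to trade $F_1(A_1(g,h'))$ for $B_1(g,F_1(h'))$ up to $\delta$), and the first, hexagonal coherence diagram of Definition \ref{d:equivmap} relating $\delta$, $\alpha$, $\beta$, and $F$. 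This is structurally the same bookkeeping as the associativity computation already carried out in Construction \ref{c:actiongpoid} (which used \eqref{eq:alphacoh}) and as the proof that equivariant maps compose (cf.\ Lemma 3.21 of \cite{BNZ}); the only real care needed is to track the order of composition in $W$ so that the source--target matchings in the hexagon align.
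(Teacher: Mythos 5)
Your proposal is correct and follows essentially the same route as the paper: the same definition of $\act(F,\delta)$ on objects and arrows, and the same verification of multiplication via functoriality of $F$, naturality of $\delta$, and the hexagonal coherence of Definition \ref{d:equivmap}. The only (cosmetic) difference is that you realize the arrow space as the kernel of the constant-rank bundle map $\Theta(g,x,h)=g\cdot_A x-\tilde t(h)$, whereas the paper splits $V_1\cong V_0\oplus\ker(\tilde t)$ and identifies the arrows with $s^*V_0\oplus t^*C$; these are equivalent ways of exhibiting the vector bundle structure.
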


\begin{proof}
By assumption, we have that $f_0:V_0\to G_0$ is a vector bundle.  Note that the space of arrows $\left(G_1\times_{s,{G_0},\pi_0}V_0\right) \times_{A_0,V_0,\tilde{t}}V_1$ must also naturally inherit the structure of a vector bundle.  Indeed, as explained in the proof of Proposition \ref{p:2-chainsandlLgbs}, for $C:=\ker\left(\tilde{t}\right)$, we may identify $V_1$ with $V_0\oplus C$ and, with respect to this identification, $\tilde{t}$ corresponds to the natural projection $p:V_0\oplus C \to V_0$ while $\tilde{s}$ corresponds to $p+\tilde{s}|_{C}$.  It follows that, using this identification, we may identify $\left(G_1\times_{s,{G_0},\pi_0}V_0\right) \times_{A_0,V_0,\tilde{t}}V_1$ with the bundle $s^*V_0\oplus t^*C\to G_1$.

As the structure maps of $G\times_AV$ are all built from maps of vector bundles, it follows that $G\times_A V$ is a VB-groupoid.

For $(F,\delta): (A:G_1\times_{G_0}V\to V, \alpha) \to (A:G_1\times_{G_0}V\to V, \alpha)$ a map of weak representations, define $\act(F,\delta)$ as follows:
\begin{enumerate}
\item On objects: for $x\in V_0$, define $\act(F,\delta)(x):=F_0$
\item On morphisms: for $(g,x,v)\in (G\times_A V)_1$, define 
\[\act(F,\delta)(g,x,v):= (g,F(x),\delta(g,x)F(v))\]
\end{enumerate}
As $F$ and $\delta$ correspond to maps of vector bundles, it follows that $\act(F,\delta)$ corresponds to a pair of maps of vector bundles.  

To check that $\act(F,\delta)$ preserves multiplication, let $(g,x,v)$ and $(g',x',v')$ be a pair of composable arrows.  Then:
\begin{align*}
\act(F,\delta)(g,x,v)\act(F,\delta)(g',x',v') &= (gg',F(x'), \beta(g,g',F(x'))(g\cdot (\delta(g',x')F(v')))\delta(g,x)F(v))\\
&= (gg',F(x'), \beta(g,g',F(x'))(g\cdot(g',x'))\delta(g,g'\cdot x')F(g\cdot v')F(v)) \\ 
&= (gg',F(x'),  \delta(gg',x')F(\alpha(g,g',x'))F(g\cdot v')F(v)) \\
&= \act(F,\delta)((g,x,v)(g',x',v'))
\end{align*}
One may use similar arguments to show that $\act(F,\delta)$ must preserve identities.
\end{proof}

\begin{prop}
$\act:\WR(G) \to \VB(G)$ given by
\begin{align*}\act((F,\delta):(V,A,\alpha)\to (W,B,\beta):=\act(F,\delta):G\times_AV\to G\times_BW\end{align*}
(for $\act(F,\delta)$ defined in Lemma \ref{l:WRactiongpoids}) is a well-defined functor.
\end{prop}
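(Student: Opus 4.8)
The plan is to observe that the substantive content of $\act$ being well-defined was already discharged in Lemma \ref{l:WRactiongpoids}: there it is shown that $\act$ sends a weak representation $(V,A,\alpha)$ to the VB-groupoid $G\times_A V$, and that an equivariant map $(F,\delta)$ is sent to a genuine map of VB-groupoids $\act(F,\delta)$. Hence the only thing left to verify is that $\act$ satisfies the two functor axioms, namely preservation of identities and of composition. Since the underlying map of action groupoids is governed entirely by the formulas $\act(F,\delta)(x)=F(x)$ on objects and $\act(F,\delta)(g,x,v)=(g,F(x),\delta(g,x)\cdot F(v))$ on morphisms, both axioms reduce to substituting into these formulas and comparing.

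First I would handle identities. The identity morphism on $(V,A,\alpha)$ in $\WR(G)$ is the pair $(\id_V,\delta_{\id})$, where $\delta_{\id}\colon \id_V\circ A\Rightarrow A\circ(\id_{G_1}\times\id_V)$ is the identity natural transformation; concretely $\delta_{\id}(g,x)=\tilde u(g\cdot x)$ is the unit arrow of $V$ at $g\cdot x$. Plugging this into the morphism formula gives $\act(\id_V,\delta_{\id})(g,x,v)=(g,x,\tilde u(g\cdot x)\cdot v)=(g,x,v)$, because $v$ is an arrow of $V$ with $\tilde t(v)=g\cdot x$ and post-composing with the unit at its target leaves it unchanged. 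Thus $\act$ fixes the identity of $G\times_A V$.

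Next I would treat composition. I take composable equivariant maps $(F,\delta)\colon(V,A,\alpha)\to(W,B,\beta)$ and $(F',\delta')\colon(W,B,\beta)\to(U,C,\gamma)$, whose composite in $\WR(G)$ is $(F'\circ F,\delta'')$ with $\delta''=(\id_{F'}\circ\delta)*(\delta'\circ\id_{F\times\id})$ supplied by the composition rule for equivariant maps. The key is to read off $\delta''$ pointwise: whiskering $\delta$ by $F'$ on the left yields $F'(\delta(g,x))\colon F'(F(g\cdot x))\to F'(g\cdot F(x))$, whiskering $\delta'$ on the right yields $\delta'(g,F(x))\colon F'(g\cdot F(x))\to g\cdot F'(F(x))$, and their vertical composite is $\delta''(g,x)=\delta'(g,F(x))\cdot F'(\delta(g,x))$ in $U$. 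With this in hand, functoriality of $F'$ gives
\[
\act(F',\delta')\bigl(\act(F,\delta)(g,x,v)\bigr)=\bigl(g,\,F'(F(x)),\,\delta'(g,F(x))\cdot F'(\delta(g,x))\cdot F'(F(v))\bigr),
\]
which is exactly $\act(F'\circ F,\delta'')(g,x,v)$; agreement on objects is immediate. This establishes $\act(F',\delta')\circ\act(F,\delta)=\act\bigl((F',\delta')\circ(F,\delta)\bigr)$.

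The only genuinely delicate step is the middle one: correctly matching the composite $2$-cell $\delta''$ with the pointwise arrow $\delta'(g,F(x))\cdot F'(\delta(g,x))$. This is pure bookkeeping with the whiskering and vertical-composition conventions, but one must track the source and target objects of each arrow carefully—using the base-compatibility $f'\circ F=f$ to see the relevant fibers line up—to confirm that the two arrows of $U$ are composable in the stated order. Everything else is routine substitution, so beyond Lemma \ref{l:WRactiongpoids} and the composition rule for equivariant maps no new input is required.
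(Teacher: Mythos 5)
Your proposal is correct and follows essentially the same route as the paper: defer the object- and morphism-level well-definedness to Lemma \ref{l:WRactiongpoids}, then verify preservation of identities and of composition by direct substitution into the formula $\act(F,\delta)(g,x,v)=(g,F(x),\delta(g,x)F(v))$, with the key step being the pointwise identification of the composite $2$-cell $(\id_{F'}\circ\delta)*(\delta'\circ\id_{F\times\id})$ with $\delta'(g,F(x))\cdot F'(\delta(g,x))$. Your treatment of the identity case is in fact more explicit than the paper's, which simply asserts it is clear.
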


\begin{proof}
By Lemma \ref{l:WRactiongpoids}, we have that $G\times_A V$ is a VB-groupoid for any weak representation $(A,\alpha)$ on $V\to G_0$ and that $\act(F,\delta)$ is a map of VB-groupoids for any map of weak representations.

So all that remains to be seen is that $\act$ is a well-defined as a functor, note it is clear that $\act(\id_V,\id_{\id\times F})$ clearly corresponds to the identity functor on $G\times_A V$.  For any two maps of weak representations $(F,\delta):(V,A,\alpha)\to (W,B,\beta)$ and $(G,\epsilon):(W,B,\beta)\to (X,C,\gamma)$ and for any $(g,x,v)\in B \times_A V$,
\begin{align*}
	\act(G,\epsilon)(\act(F,\delta)(g,x,v))&=(g,G(F(x)), \epsilon(g,F(x))G(\delta(g,x)F(v)))\\
											&= (g,G(F(x)), (\id_G \circ \delta)*(\epsilon \circ \id_{F\times \id}))(g,x) G(F(v)))\\
											&= \act((G,\epsilon)\circ(F,\delta))(g,x,v)
\end{align*}
\end{proof}

We may finally state the main theorem of this paper:

\begin{thm}\label{t:main}
The functor $\act:\WR(G)\to \VB(G)$ is an equivalence of categories.  
\end{thm}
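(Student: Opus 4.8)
The plan is to avoid verifying essential surjectivity and full faithfulness of $\act$ by hand, and instead to leverage the two equivalences already established: $\Phi(G)\colon\ruth(G)\to\WR(G)$ from Theorem \ref{t:ruthsandwr} and $\Psi\colon\ruth(G)\to\VB(G)$ from Theorem \ref{t:ruthsandVB}. Concretely, I would show that the composite $\act\circ\Phi(G)\colon\ruth(G)\to\VB(G)$ is naturally isomorphic to $\Psi$. Granting this, $\act\circ\Phi(G)$ is an equivalence because $\Psi$ is; and since $\Phi(G)$ is itself an equivalence, it admits a quasi-inverse $\Phi(G)^{-1}$, so that $\act\simeq(\act\circ\Phi(G))\circ\Phi(G)^{-1}$ is a composite of equivalences and hence an equivalence. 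This reduces the theorem to a single explicit comparison of functors out of $\ruth(G)$.

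To set up the comparison, fix a $2$-term representation up to homotopy $(\delta\colon E^0\to E^1,\lambda^0,\lambda^1,\Omega)$ and unwind both sides. On the one hand, $\Psi$ produces the semidirect product of Construction \ref{c:semidirect}, with objects $E^1$ and morphisms $s^*E^1\oplus t^*E^0$. On the other hand, $\Phi(G)$ sends the representation to the weak representation whose underlying linear Lie groupoid bundle is $\Phi(\delta\colon E^0\to E^1)$ from Construction \ref{const:sumgpoid} (objects $E^1$, arrows $E^0\oplus E^1$, source the projection and target $\delta+\mathrm{pr}$), whose action $A=\Phi(\lambda^\bullet)$ acts by $\lambda^1$ on objects and by $\lambda^0+\lambda^1$ on arrows, and whose coherence $\alpha=\Phi(\Omega)$ has $E^0$-component $\Omega$. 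Applying $\act$ and writing out the morphism space $\{(g,x,h):s(g)=\pi_0(x),\ \tilde t(h)=\lambda^1_g(x)\}$, the target constraint forces the $E^1$-part of $h$ to be determined by its $E^0$-part, so the arrows are parametrised by $(g,x,h_0)$ with $x\in E^1_{s(g)}$ and $h_0\in E^0_{t(g)}$.

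The key step is to produce the isomorphism $\act(\Phi(G)(-))\cong\Psi(-)$ at the level of VB-groupoids. I expect it to be the identity on objects ($E^1$ on both sides) and, on arrows, the linear bundle map $(g,e_0,e_1)\mapsto(g,x,h_0)$ with $x=e_1$ and $h_0=-e_0$; the sign is forced by comparing the two target maps ($\delta(e_0)+\lambda^1_g(e_1)$ for $\Psi$ versus $\lambda^1_g(x)-\delta(h_0)$ for $\act$). The main obstacle is checking that this map intertwines multiplication: the action-groupoid product $(g,x,h)(g',x',h')=(gg',x',\alpha(g,g',x')(g\cdot h')\,h)$ is a threefold product of arrows in $\Phi(\delta)$, and one must unwind it using that arrows compose by adding $E^0$-components, that $g\cdot h'=(\lambda^0_g(h'_0),\lambda^1_g(h'_1))$, and that the $E^0$-component of $\alpha(g,g',x')$ is $\Omega_{g,g'}(x')$. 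Carrying this out and repeatedly invoking the structure relations of Proposition \ref{propruth} (in particular $\lambda^1_g\delta=\delta\lambda^0_g$ and $\lambda^1_{gg'}-\lambda^1_g\lambda^1_{g'}=\delta\circ\Omega_{g,g'}$) should reproduce exactly the semidirect-product multiplication $(gg',e_0+\lambda^0_g(f_0)-\Omega_{g,g'}(f_1),f_1)$ after applying $e_0\mapsto -h_0$; compatibility with units and inverses follows by the same bookkeeping.

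Finally, naturality of this isomorphism in maps of representations up to homotopy is a routine check: unwinding $\act$ applied to $\Phi(G)(\phi^0,\phi^1,\mu)$ and comparing with $\Psi(\phi^0,\phi^1,\mu)\colon(g,e_0,e_1)\mapsto(g,\phi^0(e_0)+\mu_g(e_1),\phi^1(e_1))$ shows the two agree after the same sign change, using that on arrows $\act(F,\delta)$ sends $(g,x,v)$ to $(g,F(x),\delta(g,x)F(v))$. With the natural isomorphism $\act\circ\Phi(G)\cong\Psi$ in hand, the two-out-of-three argument above completes the proof.
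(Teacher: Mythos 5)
Your argument is correct, but it takes a genuinely different route from the paper. The paper proves the theorem directly: essential surjectivity is Lemma \ref{l:esssurj}, where an arbitrary VB-groupoid $\pi:V\to G$ is shown to be isomorphic to $G\times_A K$ for a weak representation on its kernel $K=V_1|_{G_0}$ built from a choice of unital connection $\sigma$ (with $A_1(g,k)=\sigma_g(\tilde{t}(k))\,k\,\sigma_g(\tilde{s}(k))^{-1}$ and $\alpha(g,g',x)=\sigma_{gg'}(x)\sigma_{g'}(x)^{-1}\sigma_g(g'\cdot x)^{-1}$), and full faithfulness is Lemma \ref{l:ff}, a direct computation using the embedding of the kernel subgroupoid and the factorization $(g,x,v)=(u(\pi(v)),\tilde{t}(v),v)(g,x,\tilde{u}(g\cdot x))$. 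You instead invoke the two-out-of-three property, reducing everything to the natural isomorphism $\act\circ\Phi(G)\cong\Psi$ — precisely the commuting triangle the paper only asserts without proof in its final remark. Your bookkeeping for that isomorphism checks out: the target constraint $\delta(h_0)+h_1=\lambda^1_g(x)$ does parametrize arrows by $(g,x,h_0)$, the sign $h_0=-e_0$ is forced by matching targets, and the threefold product $\alpha(g,g',x')(g\cdot h')h$ has $E^0$-component $\Omega_{g,g'}(x')+\lambda^0_g(h'_0)+h_0$, which is exactly $-1$ times the semidirect-product formula. The trade-offs: your route is shorter and supplies the missing proof of the triangle, but it is less self-contained — it leans on Theorem \ref{t:ruthsandVB}, which the paper only cites from Gracia-Saz--Mehta and del Hoyo--Ortiz, and on Theorem \ref{t:ruthsandwr} — and it forgoes the constructive content of the paper's proof, namely that every VB-groupoid is explicitly an action groupoid for a weak representation on its kernel via a connection, which is one of the paper's advertised results. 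To make your write-up airtight you should carry out, rather than merely anticipate, the multiplication and naturality verifications, but no new idea is needed.
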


To prove this, we work in stages.  First, let's prove essential surjectivity.  Our technique will be to show that the quasi-action defined on the kernel of a VB-groupoid (as defined in Section 2.1 of \cite{dHF}, for instance) is in fact a weak representation.

\begin{lemma}\label{l:esssurj}
For any VB-groupoid $\pi:V\to G$, there exists a weak representation $(A,\alpha)$ of $G$ on a linear Lie groupoid bundle $\varpi:K\to G_0$ such that $G\times_AK\cong V$.
\end{lemma}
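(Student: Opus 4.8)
The plan is to reconstruct a weak representation from the data of an arbitrary VB-groupoid $\pi:V\to G$ by reading off its \emph{kernel groupoid} together with a quasi-action induced by a choice of connection, and then verify that this quasi-action assembles into a weak action whose action groupoid recovers $V$. First I would choose a unital connection $\sigma:s^*V_0\to V_1$ on $V$ (this exists by the remark following Definition \ref{d:connection}). The underlying linear Lie groupoid bundle $\varpi:K\to G_0$ will be the \emph{kernel groupoid}: set $K_0:=V_0$ on objects, and for morphisms take $K_1:=\ker(\pi_1)\subseteq V_1$, the fiber over the units of $G$. Since $\pi$ is a VB-groupoid, $K$ inherits all its structure maps from $V$ and is a linear Lie groupoid bundle over $G_0$ in the sense of Definition \ref{d:VBoverM}. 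Equivalently, by Proposition \ref{p:2-chainsandlLgbs}, $K$ corresponds to a $2$-term complex $\delta:C^0\to C^1$ with $C^1=K_0=V_0$ and $C^0=\ker(\tilde s|_{K})$.

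Next I would build the weak action. For each $g\in G_1$, the connection gives a way to transport objects: on objects define $A_0(g,x):=\tilde t(\sigma_g(x))$, and on the kernel arrows define $A$ by conjugating/translating kernel elements along $\sigma_g$, so that $A(g,\cdot):\varpi^{-1}(s(g))\to\varpi^{-1}(t(g))$ is a linear functor covering $t$. Because the connection need not respect composition on the nose, the functors $A(g,\cdot)\circ A(h,\cdot)$ and $A(gh,\cdot)$ will differ by an element of the kernel arrows; this defect is exactly the natural isomorphism $\alpha(g,h,\cdot)$, defined via the difference $\sigma_g\cdot\sigma_h\cdot(\sigma_{gh})^{-1}$ measured inside $V_1$. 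Unitality of $\sigma$ forces $\varepsilon=\id$. I would then verify the coherence conditions of Definition \ref{d:waction}: the cocycle identity \eqref{eq:alphacoh} on $\alpha$ follows from associativity of multiplication in $V$, and the compatibility of $\alpha$ with $\varepsilon$ follows from the unital normalization of $\sigma$. This shows $(A,\alpha)$ is a weak representation of $G$ on $K$.

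Finally I would exhibit the isomorphism $G\times_A K\cong V$. Both have object space $V_0$. On morphisms, the action groupoid has arrows $s^*V_0\oplus t^*C^0\to G_1$ (as computed in the proof of Lemma \ref{l:WRactiongpoids}), while $V_1$ itself splits, via the connection $\sigma$, as $s^*V_0\oplus(\text{kernel part})$: every $w\in V_1$ over $g$ can be written uniquely as $\sigma_g(\tilde s(w))$ composed with a kernel arrow. This splitting defines a bijective map of vector bundles $V_1\to (G\times_A K)_1$ covering $\id_{G_1}$, and by construction it carries the source and target maps of $V$ to $\sigma,\tau$ and intertwines multiplication, since the multiplicative defect of $\sigma$ was packaged precisely into $\alpha$. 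Hence it is an isomorphism of VB-groupoids.

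The main obstacle will be the middle step: pinning down $A$ on the kernel arrows and checking that the connection-induced defect really is a well-defined, smooth, \emph{linear} natural isomorphism $\alpha$ satisfying \eqref{eq:alphacoh}. The bookkeeping here---translating left and right multiplication by $\sigma_g(x)$ and $\sigma_{g^{-1}}$-type elements inside $V_1$, in the spirit of the adjoint-representation computation in the Example after Definition \ref{d:connection}---is where associativity in $V$ must be invoked carefully to produce exactly the cocycle condition rather than some weaker relation.
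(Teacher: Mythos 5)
Your proposal follows essentially the same route as the paper's proof: it takes the kernel groupoid $K$ (objects $V_0$, morphisms the arrows of $V$ over the units of $G$), uses a unital connection $\sigma$ to define $A_0(g,x)=\tilde t(\sigma_g(x))$ and the conjugation action $A_1(g,k)=\sigma_g(\tilde t(k))\,k\,\sigma_g(\tilde s(k))^{-1}$ on kernel arrows, packages the multiplicative defect of $\sigma$ into $\alpha$, and recovers $V$ from $G\times_A K$ via the $\sigma$-induced splitting of $V_1$. This is correct and matches the paper; the only detail to watch is the direction of $\alpha$, which must run from $A\circ(\id_G\times A)$ to $A\circ(m\times\id)$, so the composite should be $\sigma_{gg'}(x)\,\sigma_{g'}(x)^{-1}\,\sigma_g(g'\cdot x)^{-1}$ rather than the order you wrote.
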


\begin{proof}
Let $\tilde{s},\tilde{t}:V_1\to V_0$ be the structure maps of $V$.  Let $K$ be the linear Lie groupoid bundle with space of objects $V_0$ and space of morphisms $V_1|_{G_0}$.  Note this is the kernel of the Lie groupoid fibration $\pi$ (see Definition 2.1.3 of \cite{dHF}).  Choose a unital connection $\sigma:s^*V_0\to V_1$ of $V$ (see Definition \ref{d:connection}).  Define $A:G_1\times_{G_0} K\to K$ as follows:
\begin{itemize}
	\item for $x\in K_0=V_0$, define $A_0(g,x):=\tilde{t}(\sigma_g(v))$; and
	\item for $k\in K_1$, define $A_1(g,k):= \sigma_g(\tilde{t}(k))k (\sigma_g(\tilde{s}(k)))^{-1}$.
\end{itemize}
This is a well-defined functor, as, if $\tilde{s}(k)=x$, $\tilde{t}(k)=y$, then:
\[\tilde{s}(g\cdot k) = \tilde{s}((\sigma_g(x))^{-1})=\tilde{t}(\sigma_g(x))=g\cdot x \text{ and }\tilde{t}(g\cdot k)=\tilde{t}(\sigma_g(y))=g\cdot y.\] 
  For composable $k,k'\in K_1$,
 \begin{align*}g\cdot (kk') &= \sigma_g(\tilde{t}(kk'))kk' \sigma_g(\tilde{s}(kk'))^{-1} = \sigma_g(\tilde{t}(k))k\sigma_g(\tilde{s}(k))^{-1}\sigma_g(\tilde{s}(k)) k'\sigma_g(\tilde{s}(k'))^{-1} \\ &= \sigma_g(\tilde{t}(k))k\sigma_g(\tilde{s}(k))^{-1}\sigma_g(\tilde{t}(k')) k'\sigma_g(\tilde{s}(k'))^{-1} = (g\cdot k)(g\cdot k')   \end{align*}
As the composition of smooth maps of vector bundles, clearly $A_0$ and $A_1$ are both maps of vector bundles.

The fact that $\sigma$ is unital implies that this action must be unital as well.  

Next, define for each $(g,g',x)\in G_2\times_{G_0}V_0$ define 
\[\alpha(g,g',x):=\sigma_{gg'}(x)\sigma_{g'}(x)^{-1}\sigma_g(g'\cdot x)^{-1}.\]
Again, as a combinations of smooth maps of vector bundles, this is again a smooth map of vector bundles.  It is easy to check that $\alpha$ gives the natural transformation $A\circ (\id_G\times A)\Rightarrow A\circ (m\times \id_V)$ and that $\alpha$ satisfies the correct coherence conditions.

Define an isomorphism of VB-groupoids as follows: take $\phi:V \to G\times_AK$ by\\ $\phi_0=\id_{V_0}:V_0\to V_0$ and let
\[\phi_1:(G\times_AK)_1,\;(g,x,k)\mapsto k^{-1}\sigma_g(x)\]
Since $(g,x,k)$ is an arrow from $x$ to $s(k)$, it follows that $\phi_1$ preserves source and target maps.  It is a straightforward calculation to confirm that it also preserves multiplication and units.  As $\phi_1$ has a trivial kernel, it follows that the functor $\phi$ which, on objects is given by $\phi_0$ and on morphisms by $\phi_1$ yields an isomorphism of VB-groupoids over $G$.   
\end{proof}

Now, we prove that $\act$ is fully faithful.

\begin{lemma}\label{l:ff}
The functor $\act:\WR(G)\to \VB(G)$ is fully faithful.
\end{lemma}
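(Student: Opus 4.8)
The plan is to show that $\act$ induces a bijection on Hom-sets $\Hom_{\WR(G)}((V,A,\alpha),(W,B,\beta)) \to \Hom_{\VB(G)}(G\times_A V, G\times_B W)$ for each pair of weak representations. Since the action groupoids are built functorially from the weak-representation data via the connection-free Construction in Lemma \ref{l:WRactiongpoids}, the natural strategy is to reverse-engineer a map of weak representations from a given map of VB-groupoids. So first I would take an arbitrary morphism of VB-groupoids $\Theta: G\times_A V \to G\times_B W$ over $G$ and analyze its components. On objects, $\Theta_0$ is a map of vector bundles $V_0 \to W_0$ covering $\id_{G_0}$; I would show this must be (the object part of) the underlying linear-Lie-groupoid-bundle map $F$. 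On morphisms, $\Theta_1$ takes an arrow $(g,x,v) \in (G\times_A V)_1$ to some $(g', F(x), w)$; since $\Theta$ covers $\id_G$ and must respect the source map $\sigma(g,x,v)=x$, one forces $g'=g$, and the remaining output $w \in W_1$ is a linear function of $(g,x,v)$.

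The heart of the argument is to decompose this remaining data. Writing $\Theta_1(g,x,v) = (g, F(x), \Theta^\#(g,x,v))$, I would use linearity and the identification $V_1 \cong V_0 \oplus C$ (from the proof of Proposition \ref{p:2-chainsandlLgbs}, where $C = \ker \tilde t$) to argue that $\Theta^\#$ splits into a part depending on $x$ alone and a part depending on $v$ alone. Evaluating on units $\mu(x) = (u(f(x)), x, \tilde u(x))$ and comparing against the requirement that $\Theta$ preserve units should pin down the $v$-independent piece as $\delta(u(f(x)),x)$ for a candidate natural transformation $\delta$, while the $v$-linear piece should recover $F$ acting on $K$-arrows. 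The key extraction is then: define $\delta(g,x) \in W_1$ by $\delta(g,x) := \Theta^\#(g,x,\tilde u(x))$ (or the analogous expression isolating the connection-like splitting), and define $F$ on $V$-arrows through the $v$-dependence; then the functoriality of $\Theta$ (preservation of multiplication) must translate, after unwinding the formula for multiplication in the action groupoid, precisely into the two coherence diagrams of Definition \ref{d:equivmap} for $(F,\delta)$.

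The main obstacle I anticipate is verifying that the recovered pair $(F,\delta)$ is genuinely \emph{equivariant} and, separately, that $F$ is an honest morphism of linear Lie groupoid bundles rather than merely a fiberwise-linear map. Preservation of multiplication by $\Theta$ yields an equation in $W_1$ involving $\beta$, the $B$-action, $\delta$, and $F$; massaging this into the hexagonal coherence diagram for $\delta$ requires carefully matching each factor in the action-groupoid multiplication formula $(gg',x',\alpha(g,g',x')(g\cdot h')h)$ against its image, and using that $\beta$, $B$, and $F$ are all maps of vector bundles so that the linear algebra closes up. The subtlety is bookkeeping: ensuring the natural transformation $\delta$ I extract satisfies both the $\alpha$-$\beta$ compatibility hexagon and the unit compatibility triangle simultaneously, with no sign or composition-order errors.

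Once the map $\Theta \mapsto (F,\delta)$ is constructed, fullness follows because this is a genuine equivariant map whose image under $\act$ is $\Theta$ by construction, and faithfulness follows because $\act(F,\delta)$ determines $\Theta_0 = F_0$ and, via its morphism component evaluated on units and on $K$-arrows, determines both $\delta$ and $F_1$ uniquely. I would close by remarking that the whole argument is really the ``arrow-level'' analogue of the essential-surjectivity computation in Lemma \ref{l:esssurj}: there one reconstructed $(A,\alpha)$ from a VB-groupoid using a connection, and here one reconstructs $(F,\delta)$ from a VB-groupoid morphism, with the splitting $V_1 \cong V_0 \oplus \ker\tilde t$ playing the role the connection played before. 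Combined with Lemma \ref{l:esssurj}, this establishes that $\act$ is essentially surjective and fully faithful, hence an equivalence of categories, proving Theorem \ref{t:main}.
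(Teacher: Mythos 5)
Your proposal follows essentially the same route as the paper: restrict a VB-groupoid morphism to the kernel subgroupoid (the $v$-dependent part of an arrow) to recover $F$, evaluate on the arrows $(g,x,\tilde u(g\cdot x))$ to recover $\delta$, and let preservation of multiplication force the equivariance coherences, with your linear splitting playing the role of the paper's multiplicative decomposition $(g,x,v)=(u(\pi(v)),\tilde{t}(v),v)(g,x,\tilde{u}(g\cdot x))$. The one correction is that the unit in your extraction formula must sit at $g\cdot x$ rather than at $x$, since $(g,x,h)$ is an arrow of $G\times_A V$ only when $\tilde t(h)=g\cdot x$; with that fixed, your argument is the paper's.
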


\begin{proof}
Fix two weak representations $(A,\alpha)$ and $(B,\beta)$ on linear Lie groupoid bundles $\pi:V\to G_0$ and $\varpi:W\to G_0$, respectively.  To see  that $\act(F,\delta)$ uniquely determines both $F$ and $\delta$, first note that we may naturally identify $V$ and $W$ as VB-Lie subgroupoids of the action groupoids $G\times_AV$ and $G\times_BW$.  Indeed, for $(G\times_AV)$ the full subgroupoid with morphisms $(G\times_AV)|_{V_0}$, there an isomorphism of of VB-groupoids $\iota_V:V\to (G\times_AV)|_{V_0}$ given on objects by $\id_{V_0}$ and on morphisms by
\[\iota_V:V_1\to \left((G\times_AV)|_{V_0}\right)_1,\; v\mapsto (u(\pi(v)),\tilde{s}(v),v^{-1})\]
Similarly, let $\iota_W:W\to (G\times_BW)|_{W_0}$ be an isomorphism from $W$ to the kernel subgroupoid of $G\times_BW$.

It follows then that $F$ uniquely determines $\phi|_{V_0}$ for any $\phi=\act(F,\delta)$.  Now, note that for any $(g,x,v)\in (G\times_AV)_1$, we have that 
\begin{equation}\label{fact}(g,x,v)=(u(\pi(v)),\tilde{t}(v),v)(g,x,\tilde{u}(g\cdot x)).\end{equation}
Additionally, recall that $\act(F,\delta)(g,x,v):=(g,F(x),\delta(g,x)v)$; thus, for $p:(G\times_BW)_1\to W_1$ the natural projection and for $(g,x)\in G_1\times_{G_0}V_0$, take $p(\phi_1(u(\pi(v)),\tilde{t}(v),v))=w$.  Then we have that 
\begin{align*} (g,F(x),\delta(g,x)v) = \phi_1(g,x,v)&=\phi_1(u(\pi(v)),\tilde{t}(v),v)\phi_1(g,x,\tilde{u}(g\cdot x)) \\  &= (u(\pi(v)),\phi_0(\tilde{t}(v)),w)(g,\phi_0(x),\tilde{u}(\phi_0(g\cdot x))) \\ &= (g,F(x),w)  \end{align*}
Thus, $\delta(g,x)$ is also uniquely determined by $\phi$ and so $\act$ is faithful.  

To see it is full, we may reverse this process. Indeed, let $\phi:G\times_AV\to G\times_BW$ be a map of VB-groupoids.  Then since any map of VB-groupoids $\phi$ must take $(G\times_AV)|_{V_0}$ to $(G\times_BW)|_{V_0}$, it follows that $\iota_W^{-1}\circ \phi\circ \iota_V$ gives a well-defined functor $F$ from $V$ to $W$.  Note this functor is explicitly given by $F_0=\phi_0$ and $F_1=\tilde{i}\circ p\circ \phi_1$ (for $\tilde{i}:W_1\to W_1$ inversion in $W$).

Again using the decomposition \eqref{fact} above, we have that
\[\phi_1(g,x,v)=\phi_1(u(\pi(v)),\tilde{t}(v),v)\phi_1(g,x,\tilde{u}(g\cdot x)) \]
So, define 
\[\delta(g,x)=p(\phi_1(g,x,\tilde{u}(g\cdot x))).\]
It follows then by definition that $\act(F,\delta)=\phi$.  It is not difficult to show that this $\delta$ must satisfy the necessary conditions for $(F,\delta)$ to be a map of weak representations.
\end{proof}

\begin{proof}[Proof of Theorem \ref{t:main}]
By Lemma \ref{l:esssurj}, $\act$ is essentially surjective and by Lemma \ref{l:ff}, $\act$ is fully faithful.  Therefore, it defines an equivalence of categories.   
\end{proof}

\begin{rem}
For $\Psi:\ruth(G)\to \VB(G)$ the equivalence of categories of Theorem \ref{t:ruthsandVB} and $\Phi(G):\ruth(G)\to \WR(G)$ the equivalence of categories of Theorem \ref{t:ruthsandwr}, one may show that the diagram
\[\xymatrix{\ruth(G)\ar[rr]^{\Phi(G)} \ar[dr]_{\Psi} & & \WR(G) \ar[dl]^{\act} \\ & \VB(G) &}\] 
commutes up to a natural isomorphism.
\end{rem}

\end{document}